\documentclass[11pt]{article}
\usepackage[margin=1in]{geometry}
\usepackage[utf8]{inputenc}
\usepackage{amsmath,amssymb,amsthm}
\usepackage{mathtools}
\usepackage{graphicx}
\usepackage{booktabs}
\usepackage{xcolor}
\usepackage{algorithm}
\usepackage{algorithmic}
\usepackage{enumitem}
\usepackage{hyperref}

\definecolor{myBlue}{rgb}{0.0,0.0,0.55}
\hypersetup{colorlinks=true,citecolor=myBlue,linkcolor=myBlue,urlcolor=myBlue}

\theoremstyle{plain}
\newtheorem{theorem}{Theorem}[section]
\newtheorem{lemma}[theorem]{Lemma}
\newtheorem{corollary}[theorem]{Corollary}
\newtheorem{proposition}[theorem]{Proposition}
\theoremstyle{definition}

\newtheorem{example}[theorem]{Example}
\theoremstyle{remark}
\newtheorem{remark}[theorem]{Remark}

\newcommand{\email}[1]{\href{mailto:#1}{\texttt{#1}}}

\setlist[enumerate]{leftmargin=20pt}
\setlist[itemize]{leftmargin=18pt}
\setlist[enumerate,1]{label=\arabic*.}

\newcommand{\one}{\mathbf{1}}
\newcommand{\dom}{\operatorname{dom}}

\DeclareMathOperator*{\argmin}{arg\,min}
\begin{document}
\title{Acceleration of Generalized Conditional Gradient Methods}
\author{Long Chen\thanks{Department of Mathematics, University of California, Irvine, CA 92697, USA (\email{chenlong@math.uci.edu}).} \and Zeyi Xu\thanks{Department of Mathematics, University of California, Irvine, CA 92697, USA (\email{zeyix1@uci.edu}).}}
\date{}
\maketitle
\begin{abstract}
We develop an adaptive acceleration of generalized conditional gradient methods for convex objectives with a strongly convex regularizer. A computable primal--dual residual certifies Lyapunov decrease without evaluating the regularizer or any Hessian. Each iteration accepts a balanced accelerated trial when the residual is nonpositive; otherwise it takes a mirror-descent step with doubling backtracking and sets the auxiliary parameter explicitly from the dual decrease and the stopping gap. Under uniform strong-convexity and smoothness bounds on the relevant sublevel set, the method converges globally at a linear rate. Near a nondegenerate minimizer, balanced trials pass above a fixed curvature threshold. The adaptive curvature updates then guarantee a positive fraction of accelerated steps and a local accelerated rate. The method applies directly to entropically regularized Jordan--Kinderlehrer--Otto (JKO) steps whose energies contain nonlocal interaction terms with nondiagonal Hessians. Experiments on JKO steps for granular media and for Poisson--Nernst--Planck systems, in one and three dimensions, show that the method reduces both oracle calls and computing time by a large fraction compared with generalized conditional gradient methods with fixed or adaptive step sizes.

\medskip\noindent\textbf{Keywords:} generalized conditional gradient, primal--dual gap, acceleration, relative smoothness, entropic optimal transport

\noindent\textbf{MSC Classification:} 90C25, 65K05, 49M29
\end{abstract}

\section{Introduction}
We study the regularized convex problem
\begin{equation}\label{intro:reg}
\min_{x\in\mathbb R^m}F_\sigma(x),\qquad F_\sigma(x):=F(x)+T_\sigma(x),
\end{equation}
where $F$ is Legendre and $T_\sigma$ is strongly convex. We assume that $F$, $\nabla F$, $T_\sigma^*$, and $\nabla T_\sigma^*$ are available. This structure occurs in regularized inverse problems and entropically regularized transport.

The generalized conditional gradient (GCG) method \cite{BLM} linearizes $F$ and retains the regularizer. Its relaxed update is
\begin{equation}\label{intro:relaxed}
x_{k+1}=x_k+\eta_k(x_k^{++}-x_k),\qquad 0<\eta_k\leq1,
\end{equation}
where the oracle point $x_k^{++}=\nabla T_\sigma^*(-\nabla F(x_k))$ minimizes $\langle\nabla F(x_k),x\rangle+T_\sigma(x)$. Relaxation is needed because the full oracle step need not decrease the objective.

The mirror-descent interpretation of GCG is established in \cite{Bach,Pena}. Fenchel duality gives $\min_x F_\sigma(x)=-\min_v f(v)$, where
\begin{equation}\label{eq:dualObjective}
f(v):=F_-^*(v)+T_\sigma^*(v),\qquad F_-^*(v):=F^*(-v).
\end{equation}
With $v_k=-\nabla F(x_k)$ and $x_k=\nabla F^*(-v_k)=-\nabla F_-^*(v_k)$, the relaxed update \eqref{intro:relaxed} becomes mirror descent for $f$ with mirror function $F_-^*$:
\begin{equation}\label{eq:mirror}
\nabla F_-^*(v_{k+1})=\nabla F_-^*(v_k)-\eta_k\nabla f(v_k).
\end{equation}
The dual objective is relatively strongly convex with modulus one because $f=F_-^*+T_\sigma^*$. Relative smoothness means $D_f(u,v)\leq L D_{F_-^*}(u,v)$. Mirror descent with $\eta_k=1/L$ contracts the Bregman distance to the solution by the factor $1-1/L$ per step \cite{LFN}. We choose $L_k$ adaptively and accelerate this oracle iteration.

Write $g(x)=x-x^{++}$. A Hessian-driven Nesterov accelerated gradient (HNAG) splitting with relative-convexity modulus one \cite{CL,ChenHaoWei2025Accelerated} motivates the trial updates
\begin{equation}\label{intro:acc-scheme}
\begin{aligned}
x_{k+1}-x_k&=\alpha_k(y_k-x_{k+1})-\frac{1}{L_k}g(x_k),\\
y_{k+1}-y_k&=\alpha_k(x_{k+1}^{++}-y_{k+1}).
\end{aligned}
\end{equation}

We use the primal--dual energy $\mathcal E(y,v)=F_\sigma(y)+f(v)$ and prove the one-step certificate
$$
\begin{aligned}
(1+\alpha_k)\mathcal E(y_{k+1},v_{k+1})
&\leq\mathcal E(y_k,v_k)+\mathcal R_k,
\end{aligned}
$$
where the computable residual is
$$
\mathcal R_k
=f(v_{k+1})-f(v_k) +(1+\alpha_k)D_F(y_{k+1},x_{k+1})-D_F(y_k,x_{k+1}).
$$
The certificate follows from the $y$-update for any admissible primal trial. 

Each iteration evaluates a trial with $\alpha_k=1/\sqrt{L_k}$ and accepts it if $\mathcal R_k\leq0$. When the trial is certified, it gives the accelerated contraction factor $(1+1/\sqrt{L_k})^{-1}$. Otherwise, it takes the GCG step $x_{k+1}=x_k-L_k^{-1}g(x_k)$, doubling $L_k$ until the relative-smoothness test passes. It then sets $\alpha_k=-\Delta f_k/D_F(x_{k+1}^{++},x_{k+1})$, where $\Delta f_k=f(v_{k+1})-f(v_k)$, and recomputes only $y_{k+1}$. This choice makes the updated residual nonpositive. We call this method adaptive HNAG (AHNAG). 

Under the uniform bounds of Theorem~\ref{thm:aod-global}, this gives global linear convergence without counting accelerated steps. Near a nondegenerate minimizer, balanced trials pass above a fixed curvature threshold. The curvature updates then ensure a positive fraction of accelerated steps and a local accelerated rate.

For mirror-descent methods, acceleration does not follow from relative smoothness alone \cite{DTdB}. Accelerated Bregman methods use additional geometry, such as triangle scaling \cite{HRX} or GCS conditions~\cite{ChenAccMirror,XC-aamd}. Mirror duality also yields acceleration of gradient measures \cite{kim2023mirror}, while local conditional-gradient acceleration \cite{Carderera2021Local} and fully corrective GCG \cite{Bredies2024FullyCorrective} use different oracle or correction structures. Building on the established mirror interpretation of GCG \cite{Bach,Pena}, our contribution is the adaptive two-state oracle method, its computable decrease certificate, and the conditions that yield accelerated steps.

The numerical examples are entropically regularized Jordan--Kinderlehrer--Otto (JKO) steps \cite{JKO1998,Peyre,Carlier}. The JKO method constructs Wasserstein gradient flows through successive minimization problems, so faster inner solves reduce the cost of computing the flow. In the granular-media example, AHNAG oracle calls grow roughly like $\sqrt{L_\star}$, while those of adaptive GCG grow nearly linearly in $L_\star$. In the Poisson--Nernst--Planck examples, AHNAG uses fewer calls than adaptive GCG whenever both reach the tolerance. These counts include rejected trials and measure the gain beyond curvature adaptation.

Section~\ref{sec:dual-mirror} recalls the oracle and dual geometry. Section~\ref{sec:discrete-convergence} introduces the HNAG flow, the discrete certificate, and the adaptive method. Section~\ref{sec:convergence} proves global linear and local accelerated convergence. Section~\ref{sec:numerics} presents the numerical results, and Section~\ref{sec:conclusion} concludes the paper.

\section{Dual mirror descent for regularized convex problems}\label{sec:dual-mirror}
We recall the generalized conditional gradient method \cite{BLM} and its known mirror-descent interpretation~\cite{Bach,Pena}.

\subsection{Regularized convex optimization}
Let $F:\mathbb R^m\to\mathbb R\cup\{+\infty\}$ be a proper, lower semicontinuous Legendre function, with $\Omega_F:=\operatorname{int}\dom F\neq\varnothing$. Let $T_\sigma$ be proper, lower semicontinuous, and convex, with a fixed parameter $\sigma$. Define
$
\Omega_T:=\operatorname{ri}\dom T_\sigma,
\Omega_{F_\sigma}:=\Omega_F\cap\Omega_T,
$
and assume that $\Omega_{F_\sigma}$ is nonempty and $T_\sigma$ is strongly convex. We study the regularized convex problem
\begin{equation}\label{eq:reg}
\min_{x\in\mathbb R^m}F_\sigma(x),\qquad F_\sigma(x):=F(x)+T_\sigma(x).
\end{equation}
We assume that the conjugate
{$$
  T_\sigma^*(v):=\sup_{x\in\mathbb R^m}
  \{\langle v,x\rangle-T_\sigma(x)\}
$$}
has a computable value and gradient. Strong convexity makes $T_\sigma^*$ finite and differentiable.

Assume that \eqref{eq:reg} has a minimizer $x^\star\in\Omega_{F_\sigma}$. Strict convexity of $F_\sigma$ makes it unique. Convex optimality and conjugacy give
\begin{equation}\label{eq:regEuler}
  x^\star=\nabla T_\sigma^*(-\nabla F(x^\star)).
\end{equation}
The minimizer depends on $\sigma$; we suppress this dependence in $x^\star$.

\subsection{Fixed-point and conditional-gradient forms}
{The optimality condition \eqref{eq:regEuler} suggests the fixed-point iteration
\begin{equation}\label{eq:fixpoint}
  v_k=-\nabla F(x_k),\qquad x_{k+1}=\nabla T_\sigma^*(v_k).
\end{equation}
We assume that $x_0$ and every oracle point lie in $\Omega_{F_\sigma}$. Convexity then keeps every relaxed step in this set.}

The map in \eqref{eq:fixpoint} is the oracle of the generalized conditional-gradient method of Bredies, Lorenz, and Maass \cite{BLM}. It linearizes $F$ and keeps $T_\sigma$ unchanged:
\begin{equation}\label{eq:oracle}
x_k^{++}
  :=\argmin_{x\in\mathbb R^m}
  \{\langle\nabla F(x_k),x\rangle+T_\sigma(x)\}.
\end{equation}
Strong convexity makes the oracle single-valued even when $T_\sigma$ is nonsmooth.

For a convex function $h$ differentiable at $y\in\operatorname{ri}\dom h$, its Bregman divergence is
\begin{equation}\label{eq:Bregman}
  D_h(x,y):=h(x)-h(y)-\langle\nabla h(y),x-y\rangle\geq0.
\end{equation}
Here $x\in\dom h$, and $x-y$ lies in the direction space of $\operatorname{aff}(\dom h)$. For ease of presentation, the primal Bregman formulas below assume that $T_\sigma$ is differentiable on $\Omega_T$, with gradients taken relative to $\operatorname{aff}(\dom T_\sigma)$. The oracle and dual formulations cover nonsmooth $T_\sigma$.

{The Bregman remainder of $F_\sigma$ splits as
$
  D_{F_\sigma}(x,x_k)=D_F(x,x_k)+D_{T_\sigma}(x,x_k).
$
Dropping $D_F$ yields the lower model
\begin{equation}\label{eq:BregT}
x_k^{++}
 =\argmin_{x\in\dom F_\sigma}
  \bigl\{F_\sigma(x_k)+\langle\nabla F_\sigma(x_k),x-x_k\rangle
                  +D_{T_\sigma}(x,x_k)\bigr\}.
\end{equation}
Merging the terms involving $T_\sigma$ gives
\begin{equation}\label{eq:BregT2}
x_k^{++}
 =\argmin_{x\in\dom F_\sigma}
  \bigl\{F(x_k)+\langle\nabla F(x_k),x-x_k\rangle+T_\sigma(x)\bigr\},
\end{equation}
whose minimizer is the oracle point in \eqref{eq:oracle}. The terms involving $\nabla T_\sigma(x_k)$ cancel, so the model also applies to nonsmooth $T_\sigma$.

Thus \eqref{eq:BregT} is the unit-step primal mirror-descent model for $F_\sigma$ with mirror $T_\sigma$. It is a lower model and need not decrease $F_\sigma$. A generalized conditional-gradient method \cite{BLM} with exact line search chooses
$$
\eta_k\in\argmin_{\eta\in[0,1]}F_\sigma\bigl(x_k+\eta(x_k^{++}-x_k)\bigr)
$$
and sets $x_k^+=x_k+\eta_k(x_k^{++}-x_k)$. The superscript $+$ denotes the relaxed oracle step. The superscript $++$ indicates the full oracle step, which lies beyond $x_k^+$ when $0<\eta_k<1$.

For a general step size $\eta_k>0$, primal mirror descent uses the model
\begin{equation}\label{eq:BregTmirror}
  \min_{x\in\dom F_\sigma}
  \bigl\{F_\sigma(x_k)+\langle\nabla F_\sigma(x_k),x-x_k\rangle
                  +\eta_k^{-1}D_{T_\sigma}(x,x_k)\bigr\}.
\end{equation}
When $\eta_k\ne1$, this model requires $\nabla T_\sigma(x_k)$, which may be costly. For nonsmooth $T_\sigma$, its extension uses a chosen subgradient at $x_k$ in both the linear model and the Bregman term; these contributions cancel only when $\eta_k=1$.

We next identify the relaxed step as mirror descent on the dual. Relative smoothness then gives a prescribed step size without exact line search.}

\subsection{Dual formulation}

{The Fenchel--Moreau theorem gives
\begin{equation}\label{eq:dualT}
  T_\sigma(x)=\sup_{v\in\mathbb R^m}
  \{\langle v,x\rangle-T_\sigma^*(v)\}.
\end{equation}
Substitution into \eqref{eq:reg} yields
$$
  \min_x F_\sigma(x)
  =\min_x\sup_v
  \{F(x)+\langle v,x\rangle-T_\sigma^*(v)\}.
$$
The minimizer lies in $\Omega_{F_\sigma}\subset\operatorname{int}\dom F\cap\dom T_\sigma$, where $F$ is continuous. Fenchel duality therefore permits the exchange of the infimum and supremum, and the dual maximum is attained. For fixed $v$,
$$
  \inf_x\{F(x)+\langle v,x\rangle\}
  =-\sup_x\{\langle -v,x\rangle-F(x)\}
  =-F^*(-v).
$$
Consequently,
$$
  \min_x F_\sigma(x)
  =\max_v\{-F^*(-v)-T_\sigma^*(v)\}
  =-\min_v f(v),
$$
where
\begin{equation}\label{eq:dual-objective-fenchel}
  f(v):=F_-^*(v)+T_\sigma^*(v),\qquad
  F_-^*(v):=F^*(-v).
\end{equation}
Set $\Omega_{F^*}:=-\operatorname{int}\dom F^*$. We call $F_-^*$ the reflected conjugate of $F$.}
The function $f$ is the negative Fenchel dual objective; it differs from the conjugate $F_\sigma^*$.

\begin{lemma}[Dual objective]\label{lem:dual-objective-properties}
The function $f$ is Legendre with $\operatorname{int}\dom f=\Omega_{F^*}$. For $v\in\Omega_{F^*}$, set $x(v):=\nabla F^*(-v)$ and $x^{++}(v):=\nabla T_\sigma^*(v)$. Then
$$
\nabla f(v)=-\nabla F^*(-v)+\nabla T_\sigma^*(v)=x^{++}(v)-x(v).
$$
For $u,v\in\Omega_{F^*}$,
$$
D_f(v,u)=D_{F_-^*}(v,u)+D_{T_\sigma^*}(v,u)\geq D_{F_-^*}(v,u).
$$
The unique minimizer is $v^\star=-\nabla F(x^\star)$, with
$$
\nabla f(v^\star)=0,\qquad f(v^\star)=-F_\sigma(x^\star),\qquad D_f(v,v^\star)=f(v)-f(v^\star).
$$
\end{lemma}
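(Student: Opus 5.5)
The plan is to reduce everything to standard facts about Legendre functions and conjugates. Since $F$ is proper, lsc and Legendre, Rockafellar's theorem gives that $F^*$ is Legendre, that $\nabla F:\operatorname{int}\dom F\to\operatorname{int}\dom F^*$ is a bijection, and that its inverse is $\nabla F^*$. Composing with $v\mapsto -v$ preserves the Legendre property, so $F_-^*$ is Legendre with $\operatorname{int}\dom F_-^*=-\operatorname{int}\dom F^*=\Omega_{F^*}$. Since $T_\sigma$ is strongly convex, $T_\sigma^*$ is finite, convex and differentiable on all of $\mathbb R^m$ (indeed with Lipschitz gradient). Hence $\dom f=\dom F_-^*$ and $\operatorname{int}\dom f=\Omega_{F^*}$.

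Next I will check the two Legendre conditions for $f$. For essential strict convexity, $F_-^*$ is strictly convex on $\Omega_{F^*}$ and $T_\sigma^*$ is convex, so $f$ is strictly convex there. For essential smoothness, $f$ is differentiable on $\Omega_{F^*}$. If $v_j\to\bar v\in\partial\,\dom f$, then $|\nabla F_-^*(v_j)|\to\infty$, while $\nabla T_\sigma^*(v_j)$ stays bounded by continuity of $\nabla T_\sigma^*$; therefore $|\nabla f(v_j)|\to\infty$. The gradient formula follows from the chain rule, $\nabla F_-^*(v)=-\nabla F^*(-v)$, together with the definitions of $x(v)$ and $x^{++}(v)$. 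The Bregman splitting is linearity of $h\mapsto D_h$ applied to $f=F_-^*+T_\sigma^*$, and the inequality is $D_{T_\sigma^*}\ge0$ by convexity.

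For the minimizer, set $v^\star:=-\nabla F(x^\star)$. Since $x^\star\in\operatorname{int}\dom F$, the bijection gives $-v^\star\in\operatorname{int}\dom F^*$ and $\nabla F^*(-v^\star)=x^\star$, so $v^\star\in\Omega_{F^*}$ and $x(v^\star)=x^\star$. The optimality condition \eqref{eq:regEuler} gives $x^{++}(v^\star)=x^\star$, hence $\nabla f(v^\star)=0$. Convexity then makes $v^\star$ a minimizer, and strict convexity on $\Omega_{F^*}$ makes it unique. Uniqueness also needs that no minimizer lies on the boundary of $\dom f$; for an essentially smooth function, every minimizer lies in the interior (a standard consequence, since $\partial f=\varnothing$ off the interior).

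For the value, apply Fenchel--Young with equality. The relation $-v^\star=\nabla F(x^\star)$ gives $F^*(-v^\star)=-\langle v^\star,x^\star\rangle-F(x^\star)$, and $x^\star=\nabla T_\sigma^*(v^\star)$ gives $T_\sigma^*(v^\star)=\langle v^\star,x^\star\rangle-T_\sigma(x^\star)$. Adding yields $f(v^\star)=-F_\sigma(x^\star)$, consistent with the duality computed above. Finally, $D_f(v,v^\star)=f(v)-f(v^\star)$ because $\nabla f(v^\star)=0$.

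I expect the main obstacle to be the essential-smoothness and interior argument, since it requires citing the Legendre conjugacy theorem carefully and using that $T_\sigma^*$ is smooth everywhere. Everything else is routine.
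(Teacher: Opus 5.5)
Your proposal is correct and follows essentially the same route as the paper. Both arguments inherit the domain and strict convexity from $F_-^*$, obtain essential smoothness because $\nabla T_\sigma^*$ stays bounded near finite boundary points, use additivity for the gradient and Bregman formulas, use \eqref{eq:regEuler} for $\nabla f(v^\star)=0$, and use Fenchel equality for the optimal value. Your extra observation, that any minimizer lies in $\dom\partial f=\operatorname{int}\dom f$, supplies a detail the paper leaves implicit in ``strict convexity gives uniqueness.''
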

\begin{proof}
Adding the finite differentiable convex function $T_\sigma^*$ to the Legendre function $F_-^*$ preserves its domain and strict convexity. At a finite boundary point, $\|\nabla F_-^*\|$ diverges while $\nabla T_\sigma^*$ stays bounded, so $f$ is essentially smooth. The gradient and divergence formulas follow by differentiation and additivity. Equation~\eqref{eq:regEuler} gives $\nabla f(v^\star)=0$; strict convexity gives uniqueness, and Fenchel equality gives the optimal value.
\end{proof}

\subsection{Mirror descent on the dual}

The identities $\nabla F_-^*(v_k)=-x_k$ and $\nabla f(v_k)=x_k^{++}-x_k$ turn the iteration
\begin{equation}\label{eq:relaxed}
x_{k+1}=x_k+\eta_k(x_k^{++}-x_k),\qquad 0<\eta_k\leq1,
\end{equation}
 into mirror descent for $f$ with mirror function $F_-^*$~\cite{Bach,Pena}:
\begin{equation}\label{eq:dualmirror}
\nabla F_-^*(v_{k+1})=\nabla F_-^*(v_k)-\eta_k\nabla f(v_k).
\end{equation}
Equivalently,
\begin{equation}\label{eq:dualMD}
  v_{k+1}=\argmin_{v\in\Omega_{F^*}}
  \bigl\{ f(v_k)+\langle\nabla f(v_k),v-v_k\rangle
                    + \eta_k^{-1}D_{F_-^*}(v,v_k)\bigr\}.
\end{equation}

The decomposition $f=F_-^*+T_\sigma^*$ gives relative strong convexity with modulus one:
\begin{equation}\label{eq:relative-lower}
  D_f(u,v)=D_{F_-^*}(u,v)+D_{T_\sigma^*}(u,v)\geq D_{F_-^*}(u,v),
  \qquad u,v\in\Omega_{F^*}.
\end{equation}
We use relative smoothness in the sense of Lu, Freund, and Nesterov \cite{LFN}.
Let $V\subseteq\Omega_{F^*}$ be a convex region containing $v^\star$ and all iterates. We assume that, for some finite $L>1$,
\begin{equation}\label{eq:L}
D_f(u,v)\leq L D_{F_-^*}(u,v),\qquad u,v\in V.
\end{equation}
By additivity, this condition is equivalent to
\begin{equation}\label{eq:relative-upper}
  D_{T_\sigma^*}(u,v)\leq(L-1)D_{F_-^*}(u,v),
  \qquad u,v\in V.
\end{equation}

Recall $x(v)=\nabla F^*(-v)$. If $F$ and $T_\sigma^*$ are $C^2$ and $\nabla^2F(x(v))\succ0$, then
$
  \nabla^2 F_-^*(v)=\bigl[\nabla^2F(x(v))\bigr]^{-1}.
$
On an open convex region $V$, an admissible bound, when finite, is
\begin{equation}\label{eq:relative-L}
  L = 1+\sup_{v\in V}\lambda_{\max}
  \left(
\nabla^2F(x(v))
    \nabla^2T_\sigma^*(v)
  \right).
\end{equation}
These Hessian assumptions are needed only for this formula, not for the definition \eqref{eq:L}.

The following standard estimate motivates curvature backtracking \cite{LFN}. The proof combines the mirror three-point identity with relative strong convexity to contract the distance to the solution, then uses the tested curvature bound to decrease the dual objective.

\begin{lemma}[Mirror descent with a tested curvature bound]\label{lem:mirror-tested-curvature}
Let $x_{k+1}=x_k-\frac{1}{L_k}g(x_k)$, where $g(x_k)=x_k-x_k^{++}$, $L_k\geq1$, and $v_j=-\nabla F(x_j)$. Suppose the step is admissible and
\begin{equation}\label{eq:mirror-tested-curvature}
D_f(v_{k+1},v_k)\leq L_kD_F(x_k,x_{k+1}).
\end{equation}
Then
\begin{equation}\label{eq:mirror-tested-rate}
D_F(x_{k+1},x^\star)+\frac{1}{L_k}D_f(v_{k+1}, v^\star)\leq\left(1-\frac{1}{L_k}\right)D_F(x_k,x^\star),
\end{equation}
and
\begin{equation}\label{eq:mirror-dual-descent}
f(v_{k+1})-f(v_k)\leq-L_kD_F(x_{k+1},x_k).
\end{equation}
If every step satisfies this test, then
\begin{equation}\label{eq:linear-rate}
D_F(x_{k+1},x^\star) + \frac{1}{L_k}D_f(v_{k+1}, v^\star) \leq\prod_{j<k+1}(1-\frac{1}{L_j})D_F(x_0,x^\star).
\end{equation}
For constant $L_j=L$, the factor in \eqref{eq:linear-rate} is $(1-1/L)^{k+1}$.
\end{lemma}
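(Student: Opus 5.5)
The plan is to work entirely in the dual variables and translate back at the end, using a duality identity for Bregman divergences. Write $D:=D_{F_-^*}$. Since $F$ is Legendre, $\nabla F^*=(\nabla F)^{-1}$ and $D_{F^*}(a,b)=D_F(\nabla F^*(b),\nabla F^*(a))$. With $x(v)=\nabla F^*(-v)$ this gives
$$
D(u,v)=D_{F^*}(-u,-v)=D_F(x(v),x(u)).
$$
In particular $D(v_{k+1},v_k)=D_F(x_k,x_{k+1})$, $D(v_k,v_{k+1})=D_F(x_{k+1},x_k)$, and $D(v^\star,v_j)=D_F(x_j,x^\star)$. Hence the test \eqref{eq:mirror-tested-curvature} reads $D_f(v_{k+1},v_k)\le L_k D(v_{k+1},v_k)$. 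This is relative smoothness checked only along the step, and it is all we need in place of \eqref{eq:L}. By \eqref{eq:dualmirror} with $\eta_k=1/L_k$, the step is a dual mirror step: $\nabla F_-^*(v_{k+1})-\nabla F_-^*(v_k)=-L_k^{-1}\nabla f(v_k)$. Admissibility guarantees $v_{k+1}\in\Omega_{F^*}$, so all divergences are defined.

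Next I combine three inequalities. The three-point identity for $D$, applied to the mirror relation, gives for any $u\in\Omega_{F^*}$
$$
\tfrac1{L_k}\langle\nabla f(v_k),v_{k+1}-u\rangle=D(u,v_k)-D(u,v_{k+1})-D(v_{k+1},v_k).
$$
The tested bound gives $f(v_{k+1})\le f(v_k)+\langle\nabla f(v_k),v_{k+1}-v_k\rangle+L_kD(v_{k+1},v_k)$. Relative strong convexity \eqref{eq:relative-lower} gives $f(v_k)+\langle\nabla f(v_k),u-v_k\rangle=f(u)-D_f(u,v_k)\le f(u)-D(u,v_k)$.

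Adding these with $u=v^\star$ cancels the $D(v_{k+1},v_k)$ terms and yields
$$
f(v_{k+1})-f(v^\star)+L_kD(v^\star,v_{k+1})\le(L_k-1)D(v^\star,v_k).
$$
By Lemma~\ref{lem:dual-objective-properties}, $f(v_{k+1})-f(v^\star)=D_f(v_{k+1},v^\star)$. Dividing by $L_k$ and translating via the duality identity gives \eqref{eq:mirror-tested-rate}.

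For \eqref{eq:mirror-dual-descent} I take $u=v_k$ instead. The three-point identity becomes $L_k^{-1}\langle\nabla f(v_k),v_{k+1}-v_k\rangle=-D(v_k,v_{k+1})-D(v_{k+1},v_k)$. Substituting this into the tested upper bound gives $f(v_{k+1})-f(v_k)\le -L_kD(v_k,v_{k+1})=-L_kD_F(x_{k+1},x_k)$.

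For the product bound \eqref{eq:linear-rate}, I drop the nonnegative term $L_j^{-1}D_f(v_{j+1},v^\star)$ at steps $j<k$ to get $D_F(x_{j+1},x^\star)\le(1-1/L_j)D_F(x_j,x^\star)$. Chaining these and using the full inequality at the last step gives the claim; constant $L_j=L$ gives $(1-1/L)^{k+1}$.

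The main obstacle I expect is bookkeeping, not analysis. The argument order in each Bregman divergence under the primal--dual swap must match the statement exactly, for instance $D_F(x_k,x_{k+1})$ in the test versus $D_F(x_{k+1},x_k)$ in the descent bound. I would verify the duality identity carefully first, including that it holds with gradients taken on $\Omega_F$ and $\Omega_{F^*}$ where both functions are differentiable. After that, the estimate is the standard Lu--Freund--Nesterov argument.
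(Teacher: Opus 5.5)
Your proof is correct and is essentially the paper's argument. Under $D_{F_-^*}(u,v)=D_F(x(v),x(u))$, the paper's primal three-point identity for $D_F(\cdot,x^\star)$ is exactly your dual three-point identity for $D_{F_-^*}(v^\star,\cdot)$, and the cancellation by the tested bound, the appeal to relative strong convexity, and the descent and chaining steps coincide. The only cosmetic difference is that you keep a general comparison point $u$ in Lu--Freund--Nesterov form before setting $u=v^\star$, whereas the paper specializes to $v^\star$ at once through the three-point identity for $f$ with $\nabla f(v^\star)=0$.
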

\begin{proof}
Work directly with the primal Bregman distance. Its three-point identity and $x_{k+1}-x_k=L_k^{-1}\nabla f(v_k)$ give
$$
\begin{aligned}
&D_F(x_{k+1},x^\star)-D_F(x_k,x^\star)\\
&\quad=\langle\nabla F(x_{k+1})-\nabla F(x^\star),x_{k+1}-x_k\rangle-D_F(x_k,x_{k+1})\\
&\quad=\frac1{L_k}\langle\nabla f(v_k),v^\star-v_{k+1}\rangle-D_F(x_k,x_{k+1})\\
&\quad=-\frac1{L_k}\bigl[D_f(v_{k+1},v^\star)+D_f(v^\star,v_k)-D_f(v_{k+1},v_k)\bigr]-D_F(x_k,x_{k+1}).
\end{aligned}
$$
The tested bound \eqref{eq:mirror-tested-curvature} cancels the last Bregman term, and relative strong convexity gives $D_f(v^\star,v_k)\geq D_F(x_k,x^\star)$. This proves \eqref{eq:mirror-tested-rate}.

For the objective decrease, use
$$
\nabla f(v_k)=L_k(x_{k+1}-x_k),\qquad
v_{k+1}-v_k=-(\nabla F(x_{k+1})-\nabla F(x_k)).
$$
The symmetric Bregman identity then gives
$$
\begin{aligned}
f(v_{k+1})-f(v_k)
&=\langle\nabla f(v_k),v_{k+1}-v_k\rangle+D_f(v_{k+1},v_k)\\
&= -L_k\bigl[D_F(x_k,x_{k+1})+D_F(x_{k+1},x_k)\bigr] + D_f(v_{k+1},v_k)\\
&\leq -L_kD_F(x_{k+1},x_k).
\end{aligned}
$$

Drop the nonnegative objective gap in \eqref{eq:mirror-tested-rate}, iterate up to $k$, and substitute the resulting bound for $D_F(x_k,x^\star)$ back into \eqref{eq:mirror-tested-rate}. This proves \eqref{eq:linear-rate}.
\end{proof}

Algorithm~\ref{alg:base-backtracking} uses the measured ratio to enlarge curvature after rejection and initialize the next iteration. Within iteration $k$, $L_k$ denotes the current trial curvature and $L_{k+1}$ stores its measured value,
\begin{equation}\label{eq:monitor-measured-curvature}
L_{k+1}=1+\frac{D_{T_\sigma^*}(v_{k+1},v_k)}{D_F(x_k,x_{k+1})} = \frac{D_{f}(v_{k+1},v_k)}{D_{F_-^*}(v_{k+1},v_k)}.
\end{equation}

To \emph{evaluate} a trial $x_{k+1}$ here means to compute $F(x_{k+1})$, $v_{k+1}=-\nabla F(x_{k+1})$, and $T_\sigma^*(v_{k+1})$. The test uses the cached oracle $x_k^{++}$ through
$$
D_{T_\sigma^*}(v_{k+1},v_k)
=T_\sigma^*(v_{k+1})-T_\sigma^*(v_k)
-\langle x_k^{++},v_{k+1}-v_k\rangle.
$$
The new oracle $x_{k+1}^{++}=\nabla T_\sigma^*(v_{k+1})$ is therefore computed only after the trial passes. This avoids gradient evaluations of $T_\sigma^*$ at rejected points; the test still requires its value.

\begin{algorithm}[H]
\caption{Adaptive GCG}\label{alg:base-backtracking}
\renewcommand{\baselinestretch}{1.15}\selectfont
\begin{algorithmic}[1]
\REQUIRE $x_0\in\Omega_{F_\sigma}$, $L_0\geq1$ (default $1$), tolerance $\varepsilon_{\rm tol}$.
\STATE Evaluate $x_0$ and compute $x_0^{++}=\nabla T_\sigma^*(v_0)$.
\FOR{$k=0,1,\ldots$}
\STATE \textbf{if} $D_F(x_k^{++},x_k)\leq\varepsilon_{\rm tol}$ \textbf{then return} $x_k^{++}$.
\STATE Update $x_{k+1}=x_k+L_k^{-1}(x_k^{++}-x_k)$.
\STATE Evaluate $x_{k+1}$.
\STATE Compute $L_{k+1} = 1+ D_{T_\sigma^*}(v_{k+1},v_k)/D_F(x_k,x_{k+1})$.
\WHILE{$L_{k+1} >L_k$}
\STATE Enlarge $L_k\gets\max\{2L_k,L_{k+1}\}$.
\STATE Update $x_{k+1}=x_k+L_k^{-1}(x_k^{++}-x_k)$.
\STATE Evaluate $x_{k+1}$.
\STATE Compute $L_{k+1} = 1+ D_{T_\sigma^*}(v_{k+1},v_k)/D_F(x_k,x_{k+1})$.
\ENDWHILE
\STATE Compute $x_{k+1}^{++}=\nabla T_\sigma^*(v_{k+1})$.
\ENDFOR
\end{algorithmic}
\end{algorithm}

The stopping test is justified by Proposition~\ref{prop:acc-primal-dual-gap} below.

\begin{corollary}[Linear convergence of adaptive GCG]\label{cor:adaptive-gcg-rate}
Suppose all trials of Algorithm~\ref{alg:base-backtracking} are admissible, \eqref{eq:L} holds on every tested pair, and $1\leq L_0\leq L$. Every search terminates with accepted curvature $L_k\leq2L$, and
$$
D_F(x_k,x^\star)\leq\left(1-\frac1{2L}\right)^kD_F(x_0,x^\star).
$$
\end{corollary}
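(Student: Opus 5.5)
The plan has three steps: show that every backtracking search stops with accepted curvature at most $2L$; identify the accepted test with hypothesis \eqref{eq:mirror-tested-curvature} of Lemma~\ref{lem:mirror-tested-curvature}; and then chain the resulting one-step contractions.

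\textbf{Acceptance criterion.} By \eqref{eq:monitor-measured-curvature}, the while-loop exits exactly when the measured ratio satisfies $L_{k+1}\leq L_k$. This inequality is
$$
D_f(v_{k+1},v_k)\leq L_k D_{F_-^*}(v_{k+1},v_k)=L_kD_F(x_k,x_{k+1}),
$$
where the equality is the standard conjugate identity $D_{F_-^*}(v_{k+1},v_k)=D_F(x_k,x_{k+1})$ for $v_j=-\nabla F(x_j)$. This is precisely \eqref{eq:mirror-tested-curvature}. Since \eqref{eq:L} holds on every tested pair, each measured ratio is at most $L$. Hence any trial with $L_k\geq L$ is accepted immediately.

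\textbf{Bound on the accepted curvature.} I will show by induction on $k$ that the trial curvature entering iteration $k$ is at most $L$, and that the accepted value is at most $2L$. The base case is $L_0\leq L$. Inside the loop, a rejection happens only when the current trial satisfies $L_k<L_{k+1}\leq L$. The update replaces $L_k$ by $\max\{2L_k,L_{k+1}\}$. This new value is at most $\max\{2L,L\}=2L$, and it is at least twice the old value. So the loop terminates after finitely many doublings, since once $L_k\geq L$ the test passes. The accepted value is either the initial trial (at most $L$) or a value produced by an update (at most $2L$).

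For the next iteration, the algorithm initializes with the stored measured value $L_{k+1}$. This is a ratio of a tested pair, so it is at most $L$. I will also check that it is at least $1$, which holds because $D_{T_\sigma^*}\geq0$. Thus the inductive hypothesis $1\leq L_{k+1}\leq L$ is restored. The subtle point is to keep separate the accepted curvature used in the step, which may be as large as $2L$, and the stored measured value used to initialize the next search, which is at most $L$. This bookkeeping is the main thing to get right.

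\textbf{Rate.} With acceptance established, Lemma~\ref{lem:mirror-tested-curvature} applies at each step with the accepted $L_k\leq2L$. Dropping the nonnegative term $L_k^{-1}D_f(v_{k+1},v^\star)$ in \eqref{eq:mirror-tested-rate} gives
$$
D_F(x_{k+1},x^\star)\leq\left(1-\frac{1}{L_k}\right)D_F(x_k,x^\star)\leq\left(1-\frac{1}{2L}\right)D_F(x_k,x^\star),
$$
because $1-1/L_k$ is increasing in $L_k$ and $L_k\leq 2L$. Induction on $k$ then yields the stated bound.
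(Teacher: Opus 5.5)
Your proposal is correct and follows the paper's proof essentially step for step. The shared steps are: measured ratios lie in $[1,L]$; a rejection forces $L_k<L_{k+1}\leq L$, so the update $\max\{2L_k,L_{k+1}\}$ at least doubles yet stays below $2L$; the stored ratio re-initializes the next search in $[1,L]$; and Lemma~\ref{lem:mirror-tested-curvature} with $L_k\leq 2L$ gives the rate. Your explicit identification of the loop's exit condition with \eqref{eq:mirror-tested-curvature}, via $D_{F_-^*}(v_{k+1},v_k)=D_F(x_k,x_{k+1})$, makes precise a step the paper leaves implicit.
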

\begin{proof}
The measured ratio lies in $[1,L]$, so each iteration starts with curvature at most $L$. A rejection implies $L_k<L_{k+1}\leq L$. Thus the update $L_k\gets\max\{2L_k,L_{k+1}\}$ remains below $2L$. The test passes whenever $L_k\geq L$, and each rejection at least doubles $L_k$, so the search terminates. The accepted ratio initializes the next iteration in $[1,L]$, closing the induction. Applying Lemma~\ref{lem:mirror-tested-curvature} with $L_k\leq2L$ proves the rate.
\end{proof}

\subsection{The primal--dual gap}
Define the Fenchel gap of the regularizer by
\begin{equation}\label{eq:primal-dual-gaps}
G(y,v):=T_\sigma(y)+T_\sigma^*(v)-\langle v,y\rangle.
\end{equation}
By definition \eqref{eq:dualT}, the gap is nonnegative. Fenchel equality gives $G(y,v)=0$ precisely when $v\in\partial T_\sigma(y)$, or equivalently $y=\nabla T_\sigma^*(v)$. This relation alone does not imply optimality for \eqref{eq:reg}. In contrast, strong duality gives the Lyapunov function
\begin{equation}\label{eq:gap-objectives}
\mathcal E(y,v):=F_\sigma(y)+f(v)=[F_\sigma(y)-F_\sigma(x^\star)]+[f(v)-f(v^\star)] \geq 0,
\end{equation}
so $\mathcal E$ controls both objective gaps.
\begin{lemma}[Primal--dual gap decomposition]\label{lem:monitor-gap-decomposition}
For admissible $x,y$ and $v=-\nabla F(x)$,
\begin{equation}\label{eq:monitor-gap-decomposition}
\mathcal E(y,v)=G(y,v)+D_F(y,x).
\end{equation}
Under the standing assumptions, $\mathcal E(y,-\nabla F(x))=0$ if and only if $x=y=x^\star$.
\end{lemma}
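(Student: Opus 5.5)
The plan is to verify the identity \eqref{eq:monitor-gap-decomposition} by direct algebra, and then read off the equality case from the nonnegativity of both terms. Write $v=-\nabla F(x)$. By Fenchel--Young equality for the Legendre function $F$ at the point $x$, we have $F^*(-v)=F^*(\nabla F(x))=\langle\nabla F(x),x\rangle-F(x)$. Hence
$$
F_-^*(v)=-\langle v,x\rangle-F(x).
$$
Substituting this into $\mathcal E(y,v)=F(y)+T_\sigma(y)+F_-^*(v)+T_\sigma^*(v)$ gives
$$
\mathcal E(y,v)=\bigl[T_\sigma(y)+T_\sigma^*(v)-\langle v,y\rangle\bigr]+\bigl[F(y)-F(x)+\langle v,y-x\rangle\bigr].
$$
Since $v=-\nabla F(x)$, the second bracket is $F(y)-F(x)-\langle\nabla F(x),y-x\rangle=D_F(y,x)$, and the first bracket is $G(y,v)$ by \eqref{eq:primal-dual-gaps}. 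This proves the decomposition. Admissibility, meaning $x\in\Omega_F$ and $y\in\dom F_\sigma$, is exactly what makes every term finite, so the rearrangement is legitimate.

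For the characterization of zeros, first treat the direction ``$\Leftarrow$''. If $x=y=x^\star$, then $D_F(x^\star,x^\star)=0$. Moreover \eqref{eq:regEuler} says $x^\star=\nabla T_\sigma^*(v^\star)$ with $v^\star=-\nabla F(x^\star)$, so Fenchel equality gives $G(x^\star,v^\star)=0$. Together these give $\mathcal E=0$.

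For the direction ``$\Rightarrow$'', note that both terms are nonnegative, so $\mathcal E=0$ forces $G(y,v)=0$ and $D_F(y,x)=0$ separately.
\begin{itemize}
\item Strict convexity of the Legendre function $F$ on $\Omega_F$ gives $D_F(y,x)>0$ whenever $y\neq x$. Hence $y=x$.
\item $G(y,v)=0$ means $y=\nabla T_\sigma^*(v)=\nabla T_\sigma^*(-\nabla F(x))$. Combined with $y=x$, this is the fixed-point relation \eqref{eq:regEuler} at $x$.
\item By convexity, \eqref{eq:regEuler} is equivalent to $0\in\nabla F(x)+\partial T_\sigma(x)$, so $x$ minimizes $F_\sigma$. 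By uniqueness of the minimizer, $x=x^\star$.
\end{itemize}
An alternative route is to use \eqref{eq:gap-objectives}. Then $\mathcal E=0$ forces $F_\sigma(y)=F_\sigma(x^\star)$, so $y=x^\star$ by strict convexity. It also forces $f(v)=f(v^\star)$, so $v=v^\star$ by Lemma~\ref{lem:dual-objective-properties}, and injectivity of $\nabla F$ on $\Omega_F$ then gives $x=x^\star$.

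The main obstacle is the strict positivity of $D_F(y,x)$ for $y\neq x$ when $y$ lies only in $\dom F$, possibly on its boundary, rather than in $\Omega_F$. A Legendre function is strictly convex only on the interior of its domain. In the first route I would sidestep this by using $G=0$ to get $y=\nabla T_\sigma^*(v)$, where the standing assumption places oracle points in $\Omega_{F_\sigma}$. The cleaner fix is the alternative route above, which needs only uniqueness of the primal and dual minimizers and never uses strictness of $D_F$ at boundary points.
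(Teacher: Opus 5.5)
Your proof is correct and follows the paper's argument. The identity comes from Fenchel equality $F^*(-v)=-F(x)-\langle v,x\rangle$ and regrouping. The zero case comes from nonnegativity of both terms, $D_F(y,x)=0\Rightarrow y=x$, $G(y,v)=0\Rightarrow v\in\partial T_\sigma(y)$, and uniqueness of the minimizer. Your alternative route through \eqref{eq:gap-objectives} and uniqueness of $v^\star$ is also valid.

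The boundary worry you raise is not an obstacle. Take $x\in\Omega_F$ and any $y\in\dom F$ with $D_F(y,x)=0$. Then $t\mapsto F(x+t(y-x))-F(x)-t\langle\nabla F(x),y-x\rangle$ is convex and nonnegative on $[0,1]$ and vanishes at both endpoints, so it vanishes identically. Hence $F$ is affine on the segment $[x,y]$, whose points other than $y$ lie in $\Omega_F$, and this contradicts strict convexity there unless $y=x$. In any case, admissible points lie in $\Omega_{F_\sigma}$, so the issue does not arise.
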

\begin{proof}
Since $v=-\nabla F(x)$, Fenchel equality gives $F^*(-v)=-F(x)-\langle v,x\rangle$. Expanding the gaps therefore yields
$$
\begin{aligned}
\mathcal E(y,v)-G(y,v)
&=F(y)+F^*(-v)+\langle v,y\rangle\\
&=F(y)-F(x)-\langle\nabla F(x),y-x\rangle=D_F(y,x).
\end{aligned}
$$
Both terms in \eqref{eq:monitor-gap-decomposition} are nonnegative. Strict convexity of $F$ gives $D_F(y,x)=0$ only when $y=x$, while $G(y,v)=0$ gives $v\in\partial T_\sigma(y)$. If both vanish, then $0\in\nabla F(x)+\partial T_\sigma(x)$, so $x=y=x^\star$. Conversely, optimality at $x^\star$ makes both terms zero.
\end{proof}

\begin{proposition}[A computable stopping test]\label{prop:acc-primal-dual-gap}
For admissible $x$, let $v=-\nabla F(x)$ and $x^{++}=\nabla T_\sigma^*(v)$. Then
\begin{equation}\label{eq:acc-primal-dual-gap}
F_\sigma(x^{++})+f(v)=D_F(x^{++},x),\qquad
0\leq F_\sigma(x^{++})-F_\sigma(x^\star)\leq D_F(x^{++},x).
\end{equation}
\end{proposition}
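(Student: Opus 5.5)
The plan is to apply Lemma~\ref{lem:monitor-gap-decomposition} with $y=x^{++}$ and then use the sign of the Lyapunov function \eqref{eq:gap-objectives}. No new estimate should be needed.

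First, I will check that the lemma applies. By the standing assumption, every oracle point lies in $\Omega_{F_\sigma}$, so the pair $x, y=x^{++}$ is admissible, and $v=-\nabla F(x)$ is exactly the form the lemma requires. The lemma then gives
$$
\mathcal E(x^{++},v)=F_\sigma(x^{++})+f(v)=G(x^{++},v)+D_F(x^{++},x).
$$
Next, $x^{++}=\nabla T_\sigma^*(v)$, so Fenchel equality (recalled after \eqref{eq:primal-dual-gaps}) gives $v\in\partial T_\sigma(x^{++})$. Hence $G(x^{++},v)=0$, which proves the identity in \eqref{eq:acc-primal-dual-gap}.

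For the inequality, the lower bound $0\le F_\sigma(x^{++})-F_\sigma(x^\star)$ holds because $x^\star$ is the minimizer. For the upper bound, I will split $\mathcal E$ as in \eqref{eq:gap-objectives}, using strong duality $f(v^\star)=-F_\sigma(x^\star)$ from Lemma~\ref{lem:dual-objective-properties}:
$$
D_F(x^{++},x)=[F_\sigma(x^{++})-F_\sigma(x^\star)]+[f(v)-f(v^\star)].
$$
The dual bracket is nonnegative since $v^\star$ minimizes $f$, and dropping it gives the upper bound.

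I expect no real obstacle. The only points that need care are that $x^{++}\in\dom T_\sigma\cap\operatorname{int}\dom F$, so that $F_\sigma(x^{++})$ is finite, and that Fenchel equality holds with $T_\sigma$ possibly nonsmooth. The first is covered by the admissibility assumption. The second is the subdifferential characterization, which needs no differentiability of $T_\sigma$.
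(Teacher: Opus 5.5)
Your proposal is correct and matches the paper's proof: Fenchel equality gives $G(x^{++},v)=0$, Lemma~\ref{lem:monitor-gap-decomposition} with $y=x^{++}$ gives the identity, and the splitting \eqref{eq:gap-objectives} gives the bound. Your remarks on admissibility and on nonsmooth $T_\sigma$ only make explicit what the paper's one-line argument uses implicitly.
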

\begin{proof}
Fenchel equality gives $G(x^{++},v)=0$. Apply Lemma~\ref{lem:monitor-gap-decomposition} with $y=x^{++}$ and use \eqref{eq:gap-objectives}.
\end{proof}

The stopping test uses only $F$, $\nabla F$, and the oracle output. In particular, it requires no value of $T_\sigma$ at the returned point.
The quantity $D_F(y_k,x_k)$ alone is not a stopping certificate: it vanishes when $y_k=x_k$, even away from the solution. The missing Fenchel gap $G(y_k,v_k)$ need not vanish, whereas $G(x_k^{++},v_k)=0$ makes the oracle stopping gap exact.

\section{Adaptive acceleration of generalized conditional gradient methods}\label{sec:discrete-convergence}
We discretize a two-state flow, derive a computable Lyapunov certificate, and use it to choose the trial parameters.
Throughout this section, all primal states and oracle points are assumed to lie in $\Omega_{F_\sigma}$. For the algorithms below, this follows from admissible initial states and oracle outputs, since every update is a convex combination.

\subsection{Hessian-driven Nesterov accelerated gradient method}\label{sec:continuous-convergence}
The Hessian-driven Nesterov accelerated gradient method (HNAG)~\cite{CL} combines variable and operator splitting~\cite{ChenHaoWei2025Accelerated} with Hessian-driven damping~\cite{AlvarezAttouchBolteRedont2002,AttouchPeypouquetRedont2016,AttouchChbaniFadiliRiahi2022First-order}. Applying this splitting to the oracle residual $g(x)=x-x^{++}=-\nabla f(v)$, with $v=-\nabla F(x)$ and $x^{++}=\nabla T_\sigma^*(v)$, gives the two-state flow
\begin{equation}\label{eq:acc-flow}
\begin{aligned}
x'&=y-x-\beta g(x),\\
y'&=x-y-g(x) = x^{++}-y.
\end{aligned}
\end{equation}
For constant $\beta>0$ and $C^2$ functions $F$ and $T_\sigma^*$, eliminating $y$ gives
\begin{equation}\label{eq:acc-flow-second-order}
x''+2x'+\beta Dg(x)x'+(1+\beta)g(x)=0.
\end{equation}
Here $Dg(x)=I+\nabla^2T_\sigma^*(v)\nabla^2F(x)$, and the damping term uses $Dg(x)x'=\frac{\mathrm d}{\mathrm dt}g(x)$. Discretizing this derivative through residual differences uses only gradients and oracle values.

A semi-implicit splitting of \eqref{eq:acc-flow} gives
$$
\frac{x_{k+1}-x_k}{\alpha_k}=y_k-x_{k+1}-\beta_k g(x_k),\qquad
\frac{y_{k+1}-y_k}{\alpha_k}=x_{k+1}^{++}-y_{k+1}.
$$
Set $\alpha_k\beta_k=1/L_k$, with $L_k\geq1$ and $\alpha_k>0$. A balanced trial uses $\alpha_k=\beta_k=1/\sqrt{L_k}$. The resulting oracle iteration is
\begin{equation}\label{eq:acc-scheme}
\begin{aligned}
x_{k+1}&=\frac{x_k-\frac{1}{L_k}g(x_k)+\alpha_k y_k}{1+\alpha_k}
=\frac{x_k^++\alpha_k y_k}{1+\alpha_k},
\qquad x_k^+=x_k-\frac{1}{L_k}g(x_k),\\
y_{k+1}&=\frac{y_k+\alpha_k x_{k+1}^{++}}{1+\alpha_k}.
\end{aligned}
\end{equation}
Both updates are convex combinations. One new oracle evaluation at $v_{k+1}=-\nabla F(x_{k+1})$ gives $x_{k+1}^{++}=\nabla T_\sigma^*(v_{k+1})$, the most expensive operation. The states in \eqref{eq:acc-scheme} and the exact residual identities below denote coupled trial values; see Figure~\ref{fig:monitor-iteration}.

\begin{figure}[tbp]
\centering
\includegraphics[width=0.5\textwidth]{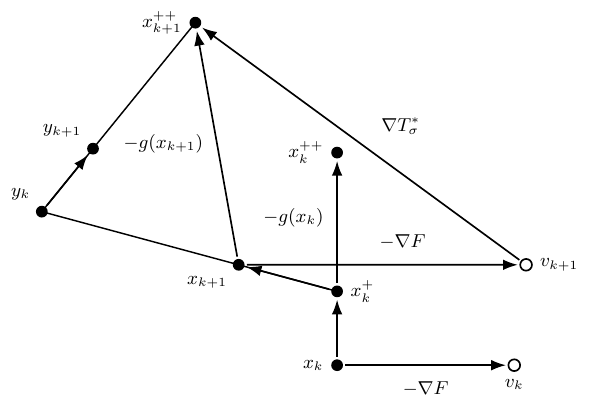}
\caption{Geometry of the coupled HNAG trial.}
\label{fig:monitor-iteration}
\end{figure}

\subsection{Contraction of the duality gap}
Recall that $G(y,v)$ is the Fenchel gap of $T_\sigma$, while $\mathcal E(y,v)$ controls both objective gaps:
$$
G(y,v):=T_\sigma(y)+T_\sigma^*(v)-\langle v,y\rangle,\qquad
\mathcal E(y,v):=F_\sigma(y)+f(v).
$$
The $y$-update contracts $G$ with the new dual variable held fixed.

\begin{lemma}[Fenchel-gap contraction]\label{lem:monitor-fenchel}
Let $\alpha_k\geq0$, $v_{k+1}=-\nabla F(x_{k+1})$, and $x_{k+1}^{++}=\nabla T_\sigma^*(v_{k+1})$. The auxiliary update $y_{k+1}=(y_k+\alpha_k x_{k+1}^{++})/(1+\alpha_k)$ satisfies
\begin{equation}\label{eq:monitor-fenchel-contraction}
G(y_{k+1},v_{k+1})\leq \frac{1}{1+\alpha_k}G(y_k,v_{k+1}).
\end{equation}
\end{lemma}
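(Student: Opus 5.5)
With $v:=v_{k+1}$ held fixed, the map $y\mapsto G(y,v)=T_\sigma(y)+T_\sigma^*(v)-\langle v,y\rangle$ is convex, because $T_\sigma$ is convex and the other two terms are constant or affine in $y$. The plan is to write $y_{k+1}$ as a convex combination and apply Jensen's inequality. The zero of $G(\cdot,v)$ at the oracle point then removes the second term.

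First, set $\theta:=1/(1+\alpha_k)\in(0,1]$, which is valid since $\alpha_k\geq0$. Then the update reads
$$
y_{k+1}=\theta\,y_k+(1-\theta)\,x_{k+1}^{++}.
$$
Both endpoints lie in $\dom T_\sigma$, since they are admissible states. Convexity of $T_\sigma$ gives
$$
T_\sigma(y_{k+1})\leq\theta T_\sigma(y_k)+(1-\theta)T_\sigma(x_{k+1}^{++}).
$$
The remaining part of $G(\cdot,v)$, namely $T_\sigma^*(v)-\langle v,\cdot\rangle$, is affine in $y$, so it splits exactly with the same weights. Adding the two pieces yields
$$
G(y_{k+1},v)\leq\theta\,G(y_k,v)+(1-\theta)\,G(x_{k+1}^{++},v).
$$

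Second, $x_{k+1}^{++}=\nabla T_\sigma^*(v_{k+1})$ means $v_{k+1}\in\partial T_\sigma(x_{k+1}^{++})$. Fenchel equality, recalled after \eqref{eq:primal-dual-gaps}, then gives $G(x_{k+1}^{++},v_{k+1})=0$. Dropping this term leaves exactly \eqref{eq:monitor-fenchel-contraction}.

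No step is a real obstacle. The one point to check is that Jensen's inequality applies to a possibly nonsmooth, extended-valued $T_\sigma$. Convexity holds on all of $\mathbb R^m$ with values in $\mathbb R\cup\{+\infty\}$, and both endpoints have finite $T_\sigma$, so the inequality is meaningful. No differentiability of $T_\sigma$ is needed.
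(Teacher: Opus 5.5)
Your proof is correct and matches the paper's argument: convexity of $G(\cdot,v_{k+1})$ applied to the convex combination $y_{k+1}=\frac{1}{1+\alpha_k}y_k+\frac{\alpha_k}{1+\alpha_k}x_{k+1}^{++}$, followed by $G(x_{k+1}^{++},v_{k+1})=0$ from Fenchel equality. Your remark on the extended-valued case is a harmless extra: the inequality holds trivially if $T_\sigma(y_k)=+\infty$, and $x_{k+1}^{++}\in\dom T_\sigma$ because $\partial T_\sigma(x_{k+1}^{++})\ni v_{k+1}$ is nonempty.
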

\begin{proof}
The function $G(\cdot,v_{k+1})$ is convex, and Fenchel equality gives $G(x_{k+1}^{++},v_{k+1})=0$. Applying convexity to the auxiliary update yields
$$
(1+\alpha_k)G(y_{k+1},v_{k+1})
\leq G(y_k,v_{k+1})+\alpha_kG(x_{k+1}^{++},v_{k+1})
=G(y_k,v_{k+1}),
$$
which is \eqref{eq:monitor-fenchel-contraction}.
\end{proof}

This contraction uses only convexity and the $y$-update. The $x$-state enters through the certificate below.
Recall that $\mathcal E(y,v)=G(y,v)+D_F(y,x)$ for $v=-\nabla F(x)$. The certificate below controls the change of the Bregman term together with the dual objective.

\subsection{A computable one-step certificate}\label{sec:monitor-certificate}
Write $v_k=-\nabla F(x_k)$ and $\mathcal E_k:=\mathcal E(y_k,v_k)$, where $\mathcal E(y,v)=F_\sigma(y)+f(v)$ bounds both objective gaps by \eqref{eq:gap-objectives}. Computing $\mathcal E_k$ directly requires $T_\sigma(y_k)$, which may involve an inner optimization solve even when the conjugate oracle is explicit. 

Fix an evaluated point $x_{k+1}$ and write $\Delta f_k:=f(v_{k+1})-f(v_k)$. For $\alpha\geq0$ and $y_{k+1}(\alpha):=(y_k+\alpha x_{k+1}^{++})/(1+\alpha)$, define the residual
\begin{equation}\label{eq:monitor-residual}
\mathcal R_k(y_{k+1}(\alpha)):=\Delta f_k+(1+\alpha)D_F(y_{k+1}(\alpha),x_{k+1})-D_F(y_k,x_{k+1}).
\end{equation}
Here varying $\alpha$ changes only $y_{k+1}(\alpha)$; the evaluated point $x_{k+1}$, its dual variable, and its oracle remain fixed. In a coupled trial of \eqref{eq:acc-scheme}, both states depend on the trial parameter.
\begin{proposition}[Computable one-step certificate]\label{prop:monitor}
For every $\alpha\geq0$, 
\begin{equation}\label{eq:monitor-bound}
(1+\alpha)\mathcal E_{k+1}-\mathcal E_k\leq\mathcal R_k(y_{k+1}(\alpha))\leq \Delta f_k+\alpha D_F(x_{k+1}^{++},x_{k+1}).
\end{equation}
Hence $\mathcal R_k(y_{k+1}(\alpha))\leq0$ implies
\begin{equation}\label{eq:aod-step}
\mathcal E_{k+1}\leq\frac{\mathcal E_k}{1+\alpha}.
\end{equation}
If $\Delta f_k<0<D_F(x_{k+1}^{++},x_{k+1})$, the parameter
\begin{equation}\label{eq:aod-alpha}
\alpha=\frac{-\Delta f_k}{D_F(x_{k+1}^{++},x_{k+1})}
\end{equation}
gives $\mathcal R_k(y_{k+1}(\alpha))\leq0$ and hence \eqref{eq:aod-step} for the recomputed $y_{k+1}$.
\end{proposition}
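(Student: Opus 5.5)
The plan is to write $\mathcal E_{k+1}$ through the gap decomposition of Lemma~\ref{lem:monitor-gap-decomposition} and to control its Fenchel-gap part with Lemma~\ref{lem:monitor-fenchel}. Write $y_{k+1}=y_{k+1}(\alpha)$. Since $v_{k+1}=-\nabla F(x_{k+1})$, Lemma~\ref{lem:monitor-gap-decomposition} gives
$$
\mathcal E_{k+1}=G(y_{k+1},v_{k+1})+D_F(y_{k+1},x_{k+1}).
$$
Lemma~\ref{lem:monitor-fenchel} then yields
$$
(1+\alpha)\mathcal E_{k+1}\leq G(y_k,v_{k+1})+(1+\alpha)D_F(y_{k+1},x_{k+1}).
$$
It remains to express $G(y_k,v_{k+1})$ through $\mathcal E_k$. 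Apply Lemma~\ref{lem:monitor-gap-decomposition} again, now with the pair $(y_k,x_{k+1})$. This gives
$$
G(y_k,v_{k+1})=\mathcal E(y_k,v_{k+1})-D_F(y_k,x_{k+1}).
$$
Because $\mathcal E(y,v)=F_\sigma(y)+f(v)$ is separable, we have $\mathcal E(y_k,v_{k+1})=\mathcal E_k+\Delta f_k$. Substituting these two identities gives exactly $(1+\alpha)\mathcal E_{k+1}-\mathcal E_k\leq\mathcal R_k(y_{k+1}(\alpha))$. This is the first inequality in \eqref{eq:monitor-bound}.

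For the second inequality, use convexity of $D_F(\cdot,x_{k+1})$ in its first argument, which holds because it is $F$ minus an affine function. Since $y_{k+1}$ is a convex combination of $y_k$ and $x_{k+1}^{++}$,
$$
(1+\alpha)D_F(y_{k+1},x_{k+1})\leq D_F(y_k,x_{k+1})+\alpha D_F(x_{k+1}^{++},x_{k+1}).
$$
The $D_F(y_k,x_{k+1})$ terms cancel in $\mathcal R_k$, which leaves the bound $\Delta f_k+\alpha D_F(x_{k+1}^{++},x_{k+1})$.

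The two consequences follow directly. If $\mathcal R_k\leq0$, then $(1+\alpha)\mathcal E_{k+1}\leq\mathcal E_k$, which is \eqref{eq:aod-step}. For the choice \eqref{eq:aod-alpha}, the hypothesis $\Delta f_k<0<D_F(x_{k+1}^{++},x_{k+1})$ makes $\alpha>0$ well defined. With this $\alpha$ the right-hand side of \eqref{eq:monitor-bound} is $\Delta f_k+\alpha D_F(x_{k+1}^{++},x_{k+1})=0$, so $\mathcal R_k\leq0$. Here $x_{k+1}$, $v_{k+1}$ and $x_{k+1}^{++}$ stay fixed, and only $y_{k+1}$ is recomputed.

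The main point to check is the second application of Lemma~\ref{lem:monitor-gap-decomposition}, which is used with the mismatched pair $(y_k,v_{k+1})$. That lemma only requires $v=-\nabla F(x)$ for the base point $x=x_{k+1}$ and admissible $y_k$, so it applies. All points involved are in $\Omega_{F_\sigma}$ by the standing convex-combination assumption, so every Bregman term and Fenchel gap is finite.
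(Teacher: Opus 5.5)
Your proof is correct and follows the paper's argument essentially step for step: you apply the Fenchel-gap contraction of Lemma~\ref{lem:monitor-fenchel} and the gap decomposition of Lemma~\ref{lem:monitor-gap-decomposition} at the base point $x_{k+1}$, for both $y_{k+1}$ and $y_k$, to get the first inequality, and convexity of $D_F(\cdot,x_{k+1})$ along the $y$-update to get the second. You also derive the consequences for $\mathcal R_k\leq0$ and for the explicit choice of $\alpha$ just as the paper does.
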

\begin{proof}
Lemma~\ref{lem:monitor-fenchel} contracts the Fenchel gap
$$
(1+\alpha)G(y_{k+1},v_{k+1})\leq G(y_k,v_{k+1}).
$$
Lemma~\ref{lem:monitor-gap-decomposition} at $x=x_{k+1}$ gives $G(y_{k+1},v_{k+1})=\mathcal E_{k+1}-D_F(y_{k+1},x_{k+1})$ and $G(y_k,v_{k+1})=\mathcal E_k+\Delta f_k-D_F(y_k,x_{k+1})$. Substituting both identities into the preceding inequality gives the first bound in \eqref{eq:monitor-bound}. 

Convexity of $D_F(\cdot,x_{k+1})$ bounds its Bregman contribution:
$$
\begin{aligned}
(1+\alpha)D_F(y_{k+1},x_{k+1})&\leq D_F(y_k,x_{k+1})+\alpha D_F(x_{k+1}^{++},x_{k+1}).
\end{aligned}
$$
This gives the second inequality of \eqref{eq:monitor-bound}. 
\end{proof}

Since $v_k=-\nabla F(x_k)$, Fenchel equality gives
\begin{equation}\label{eq:monitor-dual-value}
f(v_k)=T_\sigma^*(v_k)-F(x_k)-\langle v_k,x_k\rangle.
\end{equation}
Thus the dual value is computed from $F$, $\nabla F$, and the oracle. 
The remaining terms in the residual \eqref{eq:monitor-residual} use $F(y_k)$, $F(y_{k+1})$, $F(x_{k+1})$, and $\nabla F(x_{k+1}) = -v_{k+1}$.

The parameter \eqref{eq:aod-alpha} keeps $x_{k+1}$ and its oracle and changes only $y_{k+1}$. It uses $F(x_{k+1}^{++})$ for the stopping gap, which the next stopping test needs anyway, and one new value $F(y_{k+1})$.

\subsection{The algorithm and the flow map}\label{sec:adaptivity}\label{sec:monitor-algorithm}

Initialize $y_0=x_0$. The subscript $k+1$ denotes the current candidate until it is accepted or discarded. Figure~\ref{fig:flow-map} shows one iteration. To \emph{evaluate} $x_j$ means to compute $F(x_j)$, $v_j=-\nabla F(x_j)$, and $T_\sigma^*(v_j)$. A full oracle call also computes $x_j^{++}=\nabla T_\sigma^*(v_j)$. In the mirror descent branch, \eqref{eq:mirror-dual-descent} gives $\Delta f_k<0$, and the stopping test at the new point ensures that the denominator in \eqref{eq:aod-alpha} is positive.

\emph{Cost.} An accepted trial uses one oracle call and one new value $F(y_{k+1})$ for the certificate; the stopping test also needs $F(x_k^{++})$. A mirror-descent step pays for the discarded trial and each base trial, then evaluates $F(x_{k+1}^{++})$ and $F(y_{k+1})$. Cached values supply all remaining terms in the residual and curvature test.

\begin{algorithm}[H]
\caption{Adaptive HNAG (AHNAG)}\label{alg:monitor}\label{alg:adapt-hnag}\label{alg:aod}
\setlength{\abovedisplayskip}{3pt} \setlength{\belowdisplayskip}{3pt}
\renewcommand{\algorithmiccomment}[1]{\hfill#1}
\begin{algorithmic}[1]
\REQUIRE $x_0\in\Omega_{F_\sigma}$, $L_0\geq1$, $\rho\in(0,1)$ (default $0.98$), $r>1/\rho$ (default $1.25$), $L_{\max}\geq\max\{L_0,2L\}$ for the convergence guarantees, tolerance $\varepsilon_{\rm tol}$.
\STATE Set $y_0=x_0$; evaluate $x_0$ and compute $x_0^{++}$.
\FOR{$k=0,1,\ldots$}
\STATE \textbf{if} $D_F(x_k^{++},x_k)\leq\varepsilon_{\rm tol}$ \textbf{then return} $x_k^{++}$.
\STATE Set $\alpha_k=1/\sqrt{L_k}$ and $x_{k+1}=(x_k-L_k^{-1}g(x_k)+\alpha_k y_k)/(1+\alpha_k)$.
\STATE Evaluate $x_{k+1}$ and compute $x_{k+1}^{++}$.
\STATE Set $y_{k+1}=(y_k+\alpha_kx_{k+1}^{++})/(1+\alpha_k)$; evaluate $\mathcal R_k$.
\IF[(mirror descent)]{$\mathcal R_k>0$}
\STATE Set $x_{k+1}=x_k-L_k^{-1}g(x_k)$ and evaluate it.
\WHILE{$D_f(v_{k+1},v_k)> L_kD_F(x_k,x_{k+1})$}
\STATE Set $L_k\gets2L_k$; recompute $x_{k+1}=x_k-L_k^{-1}g(x_k)$ from $x_k$ and evaluate it.
\ENDWHILE
\STATE Compute $x_{k+1}^{++}$ and $D_F(x_{k+1}^{++},x_{k+1})$.
\STATE \textbf{if} $D_F(x_{k+1}^{++},x_{k+1})\leq\varepsilon_{\rm tol}$ \textbf{then return} $x_{k+1}^{++}$.
\STATE Set $\alpha_k=-\Delta f_k/D_F(x_{k+1}^{++},x_{k+1})$ and $y_{k+1}=(y_k+\alpha_kx_{k+1}^{++})/(1+\alpha_k)$.
\STATE Set $L_{k+1}=\min\{rL_k,L_{\max}\}$.
\ELSE[(accept)]
\STATE Set $L_{k+1}=\max\{1,\rho L_k\}$.
\ENDIF
\STATE Cache the accepted function values and $x_{k+1}^{++}$.
\ENDFOR
\end{algorithmic}
\end{algorithm}

\begin{figure}[htbp]
\centering
\includegraphics[width=0.95\textwidth]{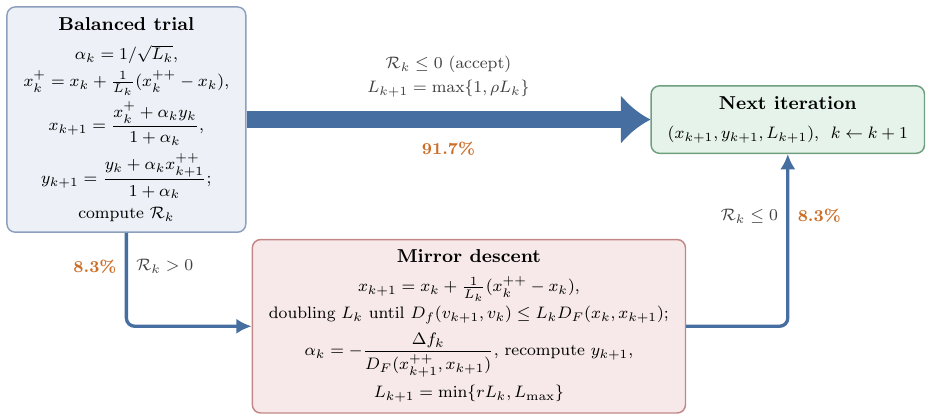}
\caption{One iteration of AHNAG: accept the balanced trial or take a certified mirror-descent step. The percentages are asymptotic bounds on the two branches under Theorem~\ref{thm:acc-local}, not measured frequencies.}
\label{fig:flow-map}
\end{figure}

\section{Convergence analysis}\label{sec:convergence}
We first prove global linear convergence at the scale of the base method, then establish a local accelerated rate by counting accepted balanced trials.

\subsection{Proof outline}

An accepted balanced trial contracts the energy with $\alpha_k=1/\sqrt{L_k}$. If the trial fails, the mirror-descent fallback terminates by the relative-smoothness test of Lemma~\ref{lem:mirror-tested-curvature} and gives $\Delta f_k<0$. Recomputing $y_{k+1}$ with \eqref{eq:aod-alpha} makes the residual nonpositive. Thus every completed iteration satisfies $\mathcal E_{k+1}\leq(1+\alpha_k)^{-1}\mathcal E_k$.
It remains to bound $\alpha_k$ uniformly away from zero. Lemma~\ref{lem:aod-descent-alpha} gives a lower bound at the scale of the mirror step. Thus every iteration contracts the energy, giving a global linear rate with nonaccelerated dependence on curvature.

Increasing $L_k$ alone does not guarantee that a balanced trial passes: the residual can remain positive as $L_k\to\infty$ (Example~\ref{ex:balanced-failure}). Near the minimizer, however, strict local relative smoothness gives a threshold $L_{\rm acc}>L_\star$ above which every balanced trial passes. Global convergence brings the trajectory into this neighborhood. There, a base step occurs only below $L_{\rm acc}$ and raises the curvature by at least $r>1$, while an accepted trial retains at least a fraction $0<\rho<1$ of the current curvature. Counting these changes gives a positive fraction of balanced steps and the local accelerated rate of Theorem~\ref{thm:acc-local}.

\subsection{Global linear convergence}\label{sec:cag-global}
To obtain uniform constants, we bound the primal candidates on a fixed compact set. Let
$$
\begin{aligned}
K_0&:=\{v:\ f(v)-f(v^\star)\leq\mathcal E_0\},\\
X_0 &:=\{\nabla F^*(-v):\ v\in K_0\},\ X_0^{++}:= \nabla T_\sigma^*(-\nabla F(X_0)),\\
Y_0&:=\{y:\ F_\sigma(y)-F_\sigma(x^\star)\leq\mathcal E_0\},\\
\mathcal X &:= \text{convex hull of } X_0\cup X_0^{++}\cup Y_0.
\end{aligned}
$$
By \eqref{eq:gap-objectives}, a state with $\mathcal E_k\leq\mathcal E_0$ has $v_k\in K_0$, $x_k\in X_0$, and $y_k\in Y_0$. 

\medskip
\noindent
\emph{Assumption {\rm(A)}.} The set $K_0$ is a compact subset of $\Omega_{F^*}$, and $\mathcal X$ is a compact subset of $\Omega_{F_\sigma}$. Fix $H\succ0$ and write $\|z\|_H^2=z^\top Hz$. The function $F$ is $\mu$-strongly convex in $\|\cdot\|_H$, with $\|\nabla F(x)-\nabla F(y)\|_{H^{-1}}\leq\Lambda\|x-y\|_H$ on a convex neighborhood of $\mathcal X$, for some $0<\mu\leq\Lambda$. The relative-smoothness bound \eqref{eq:L} holds with a constant $L$ on a convex region $V\supseteq-\nabla F(\mathcal X)$. 
\medskip

In this subsection, write $F\in\mathcal S_{\mu,\Lambda}$ for these bounds in the fixed metric $H$. A suitable choice of $H$ can make $\mu$ and $\Lambda$ close to one. Then
$$
\frac\mu2\|x-y\|_H^2\leq D_F(x,y)\leq\frac\Lambda2\|x-y\|_H^2, \quad \forall x, y\in \mathcal X.
$$
Strong convexity of $T_\sigma$ makes $\nabla T_\sigma^*$ Lipschitz, so $g$ is Lipschitz; let $\ell_g$ denote its Lipschitz constant in $\|\cdot\|_H$. If $F\in\mathcal S_{\mu,\Lambda}$ on $\mathbb R^m$, the $1/\Lambda$-strong convexity of $F_-^*$ in $\|\cdot\|_{H^{-1}}$ and Lipschitz continuity of $\nabla T_\sigma^*$ also imply a finite global bound in \eqref{eq:L}.

Coercivity of $f$ makes $K_0$ compact as a sublevel set in $\mathbb R^m$, but its containment in $\Omega_{F^*}$ must also be checked. The objective $F_\sigma$ is strongly convex, so its sublevel $Y_0$ is compact. Continuity of $\nabla F^*$ and $\nabla T_\sigma^*$ then makes $X_0$, $X_0^{++}$, and their convex hull with $Y_0$ compact. Admissibility of this hull remains a separate condition: bounded sublevels may meet a domain boundary. A sufficient setting is $F\in\mathcal S_{\mu,\Lambda}$ on $\mathbb R^m$ and $\dom T_\sigma=\mathbb R^m$. Then $F_-^*$, hence $f$, is strongly convex with full domain, all these sets are compact and admissible, and the global relative-smoothness bound follows from the Lipschitz conjugate oracle.

The cap bounds curvature carried between iterations. Under $L_{\max}\geq\max\{L_0,2L\}$, it also bounds every base-search curvature: a rejected base trial has $L_k<L$, so doubling gives $L_k<2L\leq L_{\max}$. The algorithm does not require $L$.

\begin{lemma}[One iteration]\label{lem:aod-descent-alpha}
Suppose $F\in\mathcal S_{\mu,\Lambda}$ on the relevant primal segments, and \eqref{eq:L} holds for all tested dual pairs. Assume $L_{\max}\geq2L$ and start iteration $k$ of Algorithm~\ref{alg:aod} with $L_k\leq L_{\max}$. Then the iteration makes finitely many oracle calls, keeps $L_k\leq L_{\max}$, and either passes a stopping test or ends with $L_{k+1}\leq L_{\max}$ and \eqref{eq:aod-step}, where
\begin{equation}\label{eq:aod-descent-alpha}
\alpha_k\geq\frac{c_{\rm d}}{L_{\max}},\qquad
c_{\rm d}:=\frac{\mu}{\Lambda(1+\ell_g)^2}\leq1.
\end{equation}
\end{lemma}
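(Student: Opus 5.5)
We split the iteration by which branch of Algorithm~\ref{alg:aod} it takes. In the accepted branch, $\mathcal R_k\leq0$ with $\alpha_k=1/\sqrt{L_k}$, so Proposition~\ref{prop:monitor} gives \eqref{eq:aod-step}. The bound is immediate: $L_k\leq L_{\max}$ and $L_{\max}\geq1$ give $1/\sqrt{L_k}\geq 1/L_{\max}\geq c_{\rm d}/L_{\max}$. Here $c_{\rm d}\leq1$ holds because $\mu\leq\Lambda$. The update $L_{k+1}=\max\{1,\rho L_k\}$ stays at most $L_{\max}$, and only one oracle call is made.

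In the mirror-descent branch, we first show that the backtracking loop terminates. The test $D_f(v_{k+1},v_k)\leq L_kD_F(x_k,x_{k+1})$ follows from \eqref{eq:L} whenever $L_k\geq L$, because $D_{F_-^*}(v_{k+1},v_k)=D_F(x_k,x_{k+1})$. Each rejection therefore has $L_k<L$, and doubling gives $L_k<2L\leq L_{\max}$. The loop stops after at most $\lceil\log_2(L/L_k)\rceil+1$ trials, with accepted $L_k\leq L_{\max}$. The final update $L_{k+1}=\min\{rL_k,L_{\max}\}\leq L_{\max}$. If the stopping test fires we are done. Otherwise $D_F(x_{k+1}^{++},x_{k+1})>\varepsilon_{\rm tol}\geq0$, and Lemma~\ref{lem:mirror-tested-curvature} gives $\Delta f_k\leq -L_kD_F(x_{k+1},x_k)<0$ (unless $x_{k+1}=x_k$, which would force the earlier stopping test). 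Proposition~\ref{prop:monitor} with \eqref{eq:aod-alpha} then yields \eqref{eq:aod-step}.

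The main step is the lower bound on $\alpha_k=-\Delta f_k/D_F(x_{k+1}^{++},x_{k+1})$. For the numerator we use \eqref{eq:mirror-dual-descent} and the strong-convexity bound:
\[
-\Delta f_k\geq L_kD_F(x_{k+1},x_k)\geq \frac{L_k\mu}{2}\|x_{k+1}-x_k\|_H^2=\frac{\mu}{2L_k}\|g(x_k)\|_H^2 .
\]
For the denominator we use the upper bound $D_F(x_{k+1}^{++},x_{k+1})\leq\frac{\Lambda}{2}\|g(x_{k+1})\|_H^2$. The Lipschitz property of $g$ together with $x_{k+1}-x_k=-L_k^{-1}g(x_k)$ gives
\[
\|g(x_{k+1})\|_H\leq\|g(x_k)\|_H+\ell_g\|x_{k+1}-x_k\|_H\leq\Bigl(1+\frac{\ell_g}{L_k}\Bigr)\|g(x_k)\|_H\leq(1+\ell_g)\|g(x_k)\|_H ,
\]
using $L_k\geq1$. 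Dividing the two bounds gives $\alpha_k\geq \mu/(\Lambda(1+\ell_g)^2L_k)\geq c_{\rm d}/L_{\max}$.

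The obstacle is ensuring the metric bounds apply at every point used. The points $x_k$, $x_{k+1}$, $x_{k+1}^{++}$ and the segments between them must lie where $F\in\mathcal S_{\mu,\Lambda}$ and $g$ is $\ell_g$-Lipschitz. The hypothesis ``on the relevant primal segments'' is meant to grant exactly this, and we would state that explicitly. Finitely many oracle calls follows from the bounded number of doublings.
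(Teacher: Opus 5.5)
Your proposal is correct and follows the paper's own proof essentially step for step. Both arguments use the same ingredients: the accepted branch through Proposition~\ref{prop:monitor} with $\alpha_k=1/\sqrt{L_k}\geq c_{\rm d}/L_{\max}$; termination of the doubling search once $L_k\geq L$, so that rejections occur only below $L$ and all curvatures stay below $2L\leq L_{\max}$; and the lower bound on the fallback parameter from \eqref{eq:mirror-dual-descent}, the quadratic Bregman bounds, and $\|g(x_{k+1})\|_H\leq(1+\ell_g)\|g(x_k)\|_H$. Your extra details, such as $D_{F_-^*}(v_{k+1},v_k)=D_F(x_k,x_{k+1})$ and the observation that $x_{k+1}=x_k$ would already have triggered the stopping test, only make explicit what the paper leaves implicit.
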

\begin{proof}
Follow the two branches in Figure~\ref{fig:flow-map}. If the trial passes, Proposition~\ref{prop:monitor} gives \eqref{eq:aod-step} with $\alpha_k=1/\sqrt{L_k}\geq c_{\rm d}/L_{\max}$, so the iteration ends with the accelerated contraction.

Otherwise, the mirror-descent fallback terminates once $L_k\geq L$. Its doubling search keeps its final curvature at most $L_{\max}$, and $L_{k+1}=\min\{rL_k,L_{\max}\}$ gives $L_{k+1}\leq L_{\max}$. Unless a stopping test returns, Lemma~\ref{lem:mirror-tested-curvature} gives $\Delta f_k<0$ and the new stopping gap is positive. Thus the fallback parameter \eqref{eq:aod-alpha} certifies \eqref{eq:aod-step}.

It remains to bound this parameter from below. Since $x_{k+1}-x_k=-L_k^{-1}g(x_k)$, Lipschitz continuity of $g$ gives $\|g(x_{k+1})\|_H\leq(1+\ell_g)\|g(x_k)\|_H$. The dual decrease \eqref{eq:mirror-dual-descent} and the quadratic Bregman bounds therefore yield
$$
\begin{aligned}
\alpha_k
&=\frac{-\Delta f_k}{D_F(x_{k+1}^{++},x_{k+1})}
\geq\frac{L_kD_F(x_{k+1},x_k)}{D_F(x_{k+1}^{++},x_{k+1})} \geq\frac{\mu}{\Lambda L_k}
\frac{\|g(x_k)\|_H^2}{\|g(x_{k+1})\|_H^2}
\geq\frac{c_{\rm d}}{L_k}
\geq\frac{c_{\rm d}}{L_{\max}}.
\end{aligned}
$$
\end{proof}

\begin{theorem}[Global linear convergence]\label{thm:aod-global}
Assume {\rm(A)} and $L_{\max}\geq\max\{L_0,2L\}$. Then every iteration of Algorithm~\ref{alg:aod} terminates, and all accepted states satisfy $L_k\leq L_{\max}$ and
\begin{equation}\label{eq:aod-global-rate}
\mathcal E_k\leq\Bigl(1+\frac{c_{\rm d}}{L_{\max}}\Bigr)^{-k}\mathcal E_0.
\end{equation}
If the algorithm continues indefinitely, then $x_k,y_k\to x^\star$ and the stopping gap $D_F(x_k^{++},x_k)$ tends to zero. Thus every positive stopping tolerance is reached in finitely many iterations.
\end{theorem}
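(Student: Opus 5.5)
The proof will be an induction on $k$ that applies Lemma~\ref{lem:aod-descent-alpha} at every iteration, with Assumption~(A) supplying the hypotheses of that lemma at each step. The inductive hypothesis at step $k$ is:
\begin{itemize}
\item the accepted state satisfies $\mathcal E_k\leq\mathcal E_0$ and $L_k\leq L_{\max}$;
\item the rate \eqref{eq:aod-global-rate} holds up to index $k$.
\end{itemize}
The base case holds because $y_0=x_0$ and $L_0\leq L_{\max}$.

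From $\mathcal E_k\leq\mathcal E_0$ and \eqref{eq:gap-objectives} we get $v_k\in K_0$, $x_k\in X_0$, $x_k^{++}\in X_0^{++}$ and $y_k\in Y_0$. Every primal point the iteration evaluates is a convex combination of these points. This covers the balanced trial $x_{k+1}=(x_k^++\alpha_ky_k)/(1+\alpha_k)$ and every base trial $x_k-L_k^{-1}g(x_k)$ with $L_k\geq1$. Hence all these points lie in $\mathcal X$, the primal segments lie in $\mathcal X$, and the tested dual pairs lie in $-\nabla F(\mathcal X)\subseteq V$.

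The main obstacle is one newly created point: the new oracle $x_{k+1}^{++}$, together with the segments $[x_{k+1},x_{k+1}^{++}]$ that enter the bounds in Lemma~\ref{lem:aod-descent-alpha}. Here $x_{k+1}\in\mathcal X$, but $x_{k+1}^{++}$ is not a convex combination of the old points. My plan is to run the argument in two passes.
\begin{itemize}
\item First, the mirror-descent bound \eqref{eq:mirror-dual-descent} gives $f(v_{k+1})\leq f(v_k)$ in the fallback branch. In the accepted branch, Proposition~\ref{prop:monitor} gives $\mathcal E_{k+1}\leq\mathcal E_k$, which bounds $f(v_{k+1})-f(v^\star)\leq\mathcal E_{k+1}\leq\mathcal E_0$. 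Either way $v_{k+1}\in K_0$, so $x_{k+1}\in X_0$ and $x_{k+1}^{++}\in X_0^{++}\subseteq\mathcal X$.
\item Second, convexity of $\mathcal X$ places the segment between $x_{k+1}$ and $x_{k+1}^{++}$ in $\mathcal X$. The quadratic Bregman bounds and the Lipschitz bound $\ell_g$ then apply on a neighborhood of $\mathcal X$, as the lemma requires.
\end{itemize}
One wrinkle remains for the balanced trial. The certificate \eqref{eq:monitor-bound} needs no localization, so in that branch the energy bound is obtained first and localization follows from it. I would state this ordering explicitly.

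Lemma~\ref{lem:aod-descent-alpha} then gives three things: finitely many oracle calls, $L_{k+1}\leq L_{\max}$, and $\mathcal E_{k+1}\leq(1+\alpha_k)^{-1}\mathcal E_k$ with $\alpha_k\geq c_{\rm d}/L_{\max}$ (unless a stopping test returns). This closes the induction, including $\mathcal E_{k+1}\leq\mathcal E_0$, and yields \eqref{eq:aod-global-rate}.

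For the limits, assume the algorithm never stops. Then $\mathcal E_k\to0$, so $F_\sigma(y_k)\to F_\sigma(x^\star)$, and strong convexity of $F_\sigma$ gives $y_k\to x^\star$. Likewise $f(v_k)-f(v^\star)\to0$. On the compact set $K_0$, the strict convexity of $f$ together with $D_f(v,v^\star)=f(v)-f(v^\star)$ forces $v_k\to v^\star$. Continuity of $\nabla F^*$ then gives $x_k\to x^\star$, and continuity of $\nabla T_\sigma^*$ gives $x_k^{++}\to x^\star$. Finally, $D_F(x_k^{++},x_k)\leq\frac\Lambda2\|x_k^{++}-x_k\|_H^2\to0$, so every positive tolerance is reached after finitely many iterations.
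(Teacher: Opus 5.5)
Your proposal is correct and follows the paper's proof: the same induction on $\mathcal E_k\leq\mathcal E_0$ and $L_k\leq L_{\max}$ through Lemma~\ref{lem:aod-descent-alpha}, with the dual decrease of the base step placing the new oracle in $X_0^{++}$. The only difference is in the limits. You obtain $v_k\to v^\star$ from compactness of $K_0$ and uniqueness of the minimizer, whereas the paper uses $f(v_k)-f(v^\star)\geq D_F(x^\star,x_k)\geq\frac\mu2\|x_k-x^\star\|_H^2$ to get an explicit linear rate for $\|x_k-x^\star\|_H^2+\|y_k-x^\star\|_H^2$; both suffice for the stated conclusions.
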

\begin{proof}
For a state with $\mathcal E_k\leq\mathcal E_0$, every primal candidate lies in $\mathcal X$ by convexity, so {\rm(A)} guarantees termination of the base search. Its dual decrease puts the accepted base point in $X_0$ and its oracle in $X_0^{++}$. Thus {\rm(A)} supplies the hypotheses of Lemma~\ref{lem:aod-descent-alpha}, including at the initial state. Its certified decrease keeps $\mathcal E_{k+1}\leq\mathcal E_0$, and its curvature bound permits the next iteration, so induction gives finite termination and \eqref{eq:aod-step} at every completed update. The uniform lower bound \eqref{eq:aod-descent-alpha} then gives \eqref{eq:aod-global-rate} by multiplication.

The convergence is in norm. Strong convexity of $F_\sigma$ and Legendre conjugacy give
$$
\begin{aligned}
F_\sigma(y_k)-F_\sigma(x^\star)&\geq\frac\mu2\|y_k-x^\star\|_H^2,\\
f(v_k)-f(v^\star)&\geq D_{F_-^*}(v_k,v^\star)
=D_F(x^\star,x_k)\geq\frac\mu2\|x_k-x^\star\|_H^2.
\end{aligned}
$$
Thus \eqref{eq:gap-objectives} and \eqref{eq:aod-global-rate} imply
$$
\|x_k-x^\star\|_H^2+\|y_k-x^\star\|_H^2
\leq\frac{2\mathcal E_0}{\mu}
\Bigl(1+\frac{c_{\rm d}}{L_{\max}}\Bigr)^{-k}.
$$
Continuity of the oracle then gives $x_k^{++}\to x^\star$, so $D_F(x_k^{++},x_k)\to0$.
\end{proof}

\begin{remark}\rm
The constant in the global linear rate can be improved under additional metric co-coercivity and Hessian-comparison assumptions, without changing the algorithm. This refinement retains the nonaccelerated dependence on curvature.
\end{remark}

\subsection{Exact residual identities}\label{sec:residual-metric}
The certificate $\mathcal R_k(y_{k+1}(\alpha))$ uses only the $y$-update. The global rate also uses the quantitative decrease of the mirror-descent step to bound $\alpha_k$ below. To explain acceleration, we show how the coupled $x$-update enters the residual $\mathcal R_k$.

\begin{lemma}[Residual at a fixed primal point]\label{lem:monitor-fixed-residual}
Fix an admissible $x_{k+1}$, let $v_j=-\nabla F(x_j)$, and set
$y_{k+1}=(y_k+\alpha_kx_{k+1}^{++})/(1+\alpha_k)$ with $\alpha_k\geq0$.
Then the residual \eqref{eq:monitor-residual} satisfies
\begin{equation}\label{eq:monitor-fixed-residual}
\begin{aligned}
\mathcal R_k={}&-D_f(v_k,v_{k+1})-D_F(y_k,y_{k+1})
-\alpha_kD_F(x_{k+1},y_{k+1})\\
&+\langle g(x_{k+1}),\nabla F(x_{k+1})-\nabla F(x_k)\rangle\\
&+\alpha_k\langle g(x_{k+1}),\nabla F(x_{k+1})-\nabla F(y_{k+1})\rangle.
\end{aligned}
\end{equation}
This identity does not require an update rule for $x_{k+1}$.
\end{lemma}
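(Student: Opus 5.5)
The plan is to verify \eqref{eq:monitor-fixed-residual} by rewriting the two pieces of the residual \eqref{eq:monitor-residual} separately: the dual increment $\Delta f_k$, and the Bregman part $(1+\alpha_k)D_F(y_{k+1},x_{k+1})-D_F(y_k,x_{k+1})$. The only inputs will be Lemma~\ref{lem:dual-objective-properties}, the definition of the $y$-update, and the three-point and symmetric Bregman identities for $F$. No property of $x_{k+1}$ beyond admissibility is used, which explains the final sentence of the statement.

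\emph{Step 1 (dual increment).} I would expand $f(v_k)$ around $v_{k+1}$:
$$
\Delta f_k=-D_f(v_k,v_{k+1})-\langle\nabla f(v_{k+1}),v_k-v_{k+1}\rangle .
$$
Lemma~\ref{lem:dual-objective-properties} gives $\nabla f(v_{k+1})=x_{k+1}^{++}-x_{k+1}=-g(x_{k+1})$. Moreover $v_k-v_{k+1}=\nabla F(x_{k+1})-\nabla F(x_k)$. Together these produce exactly the first term and the first inner-product term of \eqref{eq:monitor-fixed-residual}.

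\emph{Step 2 (Bregman part).} I would apply the three-point identity with base point $x_{k+1}$ and intermediate point $y_{k+1}$:
$$
D_F(y_k,x_{k+1})=D_F(y_k,y_{k+1})+D_F(y_{k+1},x_{k+1})+\langle\nabla F(y_{k+1})-\nabla F(x_{k+1}),y_k-y_{k+1}\rangle .
$$
Substituting this leaves $\alpha_kD_F(y_{k+1},x_{k+1})-D_F(y_k,y_{k+1})$ minus that inner product. Next I would use the symmetric identity
$$
D_F(y_{k+1},x_{k+1})+D_F(x_{k+1},y_{k+1})=\langle\nabla F(y_{k+1})-\nabla F(x_{k+1}),y_{k+1}-x_{k+1}\rangle
$$
to replace $\alpha_kD_F(y_{k+1},x_{k+1})$ by $-\alpha_kD_F(x_{k+1},y_{k+1})$ plus an inner product.

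\emph{Step 3 (combining the inner products).} The $y$-update $(1+\alpha_k)y_{k+1}=y_k+\alpha_kx_{k+1}^{++}$ gives $y_k-y_{k+1}=\alpha_k(y_{k+1}-x_{k+1}^{++})$. Hence both inner products pair $\nabla F(y_{k+1})-\nabla F(x_{k+1})$ with $\alpha_k$ times a vector. Their difference collapses to
$$
\alpha_k\langle\nabla F(y_{k+1})-\nabla F(x_{k+1}),x_{k+1}^{++}-x_{k+1}\rangle=\alpha_k\langle g(x_{k+1}),\nabla F(x_{k+1})-\nabla F(y_{k+1})\rangle,
$$
which is the last term of \eqref{eq:monitor-fixed-residual}.

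The main obstacle is bookkeeping rather than analysis. One must choose the right base points for the three-point identity so that the $y$-update can eliminate $y_k-y_{k+1}$. One must also track signs and argument orders so that the two surviving inner products combine into a single multiple of $g(x_{k+1})$.
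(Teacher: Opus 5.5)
Your proposal is correct and follows essentially the same route as the paper. Both arguments expand $\Delta f_k$ at $v_{k+1}$, apply the three-point identity at $y_{k+1}$ and the symmetric Bregman identity, and then eliminate $y_k-y_{k+1}$ through the auxiliary update. The paper writes that update as $y_{k+1}-y_k=\alpha_k[x_{k+1}-y_{k+1}-g(x_{k+1})]$, which is your relation because $x_{k+1}-g(x_{k+1})=x_{k+1}^{++}$.
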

\begin{proof}
Expanding $D_F(\cdot,x_{k+1})$ at $y_{k+1}$ gives
$$
\begin{aligned}
&D_F(y_{k+1},x_{k+1})-D_F(y_k,x_{k+1})\\
&\quad=\langle\nabla F(y_{k+1})-\nabla F(x_{k+1}),y_{k+1}-y_k\rangle-D_F(y_k,y_{k+1}).
\end{aligned}
$$
The auxiliary update gives
\begin{equation}\label{eq:monitor-y-displacement}
\begin{aligned}
y_{k+1}-y_k
&=\alpha_k[x_{k+1}-y_{k+1}-g(x_{k+1})].
\end{aligned}
\end{equation}
Use the symmetric Bregman identity
$$
\langle\nabla F(y_{k+1})-\nabla F(x_{k+1}),y_{k+1} - x_{k+1}\rangle
= D_F(x_{k+1},y_{k+1}) + D_F(y_{k+1},x_{k+1}).
$$

Combining these identities yields
$$
\begin{aligned}
&(1+\alpha_k)D_F(y_{k+1},x_{k+1})-D_F(y_k,x_{k+1})\\
&\quad=-\alpha_kD_F(x_{k+1},y_{k+1})-D_F(y_k,y_{k+1})+\alpha_k\langle g(x_{k+1}),\nabla F(x_{k+1})-\nabla F(y_{k+1})\rangle.
\end{aligned}
$$
Expanding the dual change at $v_{k+1}$ gives
$$
\Delta f_k=\langle g(x_{k+1}),\nabla F(x_{k+1})-\nabla F(x_k)\rangle-D_f(v_k,v_{k+1}).
$$
Adding these two expressions gives \eqref{eq:monitor-fixed-residual}.
\end{proof}

The Bregman terms in \eqref{eq:monitor-fixed-residual} are nonpositive. To control the cross terms for a coupled trial, we now introduce the secant metrics and use the $x$-update. For $H\succ0$, write $\|u\|_H^2=u^\top Hu$ and $\langle u,w\rangle_H=u^\top Hw$; $A\preceq B$ means that $B-A$ is positive semidefinite.

\begin{lemma}[Residual in secant metrics]\label{lem:monitor-secant}
Suppose $F$ is $C^2$ with positive definite Hessian on the relevant segments. Let $H_{xx}$ connect successive $x$-states and $H_{xy}$ connect the new $x$- and $y$-states:
\begin{equation}\label{eq:monitor-secant-metric}
\begin{aligned}
H_{xx}&=\int_0^1\nabla^2F\bigl(x_k+t(x_{k+1}-x_k)\bigr)\,dt,\\
H_{xy}&=\int_0^1\nabla^2F\bigl(y_{k+1}+t(x_{k+1}-y_{k+1})\bigr)\,dt.
\end{aligned}
\end{equation}
For every coupled trial of \eqref{eq:acc-scheme},
\begin{equation}\label{eq:monitor-grouped-residual}
\begin{aligned}
\mathcal R_k={}&-\left[D_f(v_k,v_{k+1})-\frac1{2L_k}\| \nabla f(v_{k+1})-\nabla f(v_k)\|_{H_{xx}}^2\right] -\frac1{2L_k}\|g(x_k)\|_{H_{xx}}^2\\
&+\Bigl[\alpha_k\langle g(x_{k+1}),y_k-y_{k+1}\rangle_{H_{xx}} -D_F(y_k,y_{k+1})-\frac1{2L_k}\|g(x_{k+1})\|_{H_{xx}}^2\Bigr]\\
&+\alpha_k\Bigl[\langle g(x_{k+1}),(H_{xy}-H_{xx})(x_{k+1}-y_{k+1})\rangle -D_F(x_{k+1},y_{k+1})\Bigr].
\end{aligned}
\end{equation}
\end{lemma}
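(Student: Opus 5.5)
The plan is to start from the fixed-point identity \eqref{eq:monitor-fixed-residual} of Lemma~\ref{lem:monitor-fixed-residual}. It holds for any admissible $x_{k+1}$, so it holds for a coupled trial. I would rewrite only its two cross terms; the three Bregman terms $-D_f(v_k,v_{k+1})$, $-D_F(y_k,y_{k+1})$ and $-\alpha_kD_F(x_{k+1},y_{k+1})$ are carried over unchanged. Write $g_0:=g(x_k)$ and $g_1:=g(x_{k+1})$. The first step is the mean-value form of the gradient differences along the two segments. Since $F$ is $C^2$ on the segments, the secant metrics \eqref{eq:monitor-secant-metric} give exactly
$$
\nabla F(x_{k+1})-\nabla F(x_k)=H_{xx}(x_{k+1}-x_k),\qquad
\nabla F(x_{k+1})-\nabla F(y_{k+1})=H_{xy}(x_{k+1}-y_{k+1}).
$$

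Next I would insert the $x$-update of \eqref{eq:acc-scheme} in the form
$$
x_{k+1}-x_k=\alpha_k(y_k-x_{k+1})-L_k^{-1}g_0 ,
$$
and split $y_k-x_{k+1}=(y_k-y_{k+1})+(y_{k+1}-x_{k+1})$. The first cross term then becomes a sum of three pieces:
$$
\alpha_k\langle g_1,y_k-y_{k+1}\rangle_{H_{xx}}
+\alpha_k\langle g_1,y_{k+1}-x_{k+1}\rangle_{H_{xx}}
-L_k^{-1}\langle g_1,g_0\rangle_{H_{xx}} .
$$
The middle piece combines with the second cross term $\alpha_k\langle g_1,H_{xy}(x_{k+1}-y_{k+1})\rangle$. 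Together they give $\alpha_k\langle g_1,(H_{xy}-H_{xx})(x_{k+1}-y_{k+1})\rangle$, which pairs with $-\alpha_kD_F(x_{k+1},y_{k+1})$ to form the last bracket of \eqref{eq:monitor-grouped-residual}.

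For the remaining inner product I would use the polarization identity
$$
-\frac1{L_k}\langle g_1,g_0\rangle_{H_{xx}}
=\frac1{2L_k}\|g_1-g_0\|_{H_{xx}}^2-\frac1{2L_k}\|g_0\|_{H_{xx}}^2-\frac1{2L_k}\|g_1\|_{H_{xx}}^2 .
$$
By Lemma~\ref{lem:dual-objective-properties}, $g(x_j)=-\nabla f(v_j)$, so $\|g_1-g_0\|_{H_{xx}}=\|\nabla f(v_{k+1})-\nabla f(v_k)\|_{H_{xx}}$. This term joins $-D_f(v_k,v_{k+1})$ to form the first bracket. The term $-\frac1{2L_k}\|g_0\|^2_{H_{xx}}$ stands alone. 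The term $-\frac1{2L_k}\|g_1\|^2_{H_{xx}}$ joins $\alpha_k\langle g_1,y_k-y_{k+1}\rangle_{H_{xx}}-D_F(y_k,y_{k+1})$ to form the middle bracket.

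The whole argument is exact bookkeeping, with no inequalities. The only point needing care is the choice of decomposition: $y_k-x_{k+1}$ must be routed through $y_{k+1}$, and $H_{xx}$ must be used for the part of the second cross term needed to cancel the $H_{xx}$-term. That choice is what produces the metric-mismatch term $H_{xy}-H_{xx}$. I would also check signs carefully in the polarization step, since $g$ enters with a minus sign relative to $\nabla f$.
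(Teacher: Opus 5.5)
Your proposal is correct and follows the paper's proof essentially step for step. You substitute the secant relations into \eqref{eq:monitor-fixed-residual} and write the coupled $x$-update as $x_{k+1}-x_k=-L_k^{-1}g(x_k)+\alpha_k(y_k-y_{k+1})+\alpha_k(y_{k+1}-x_{k+1})$, so that the $H_{xx}$-pairing with $y_{k+1}-x_{k+1}$ cancels the $H_{xy}$ term up to the mismatch $H_{xy}-H_{xx}$, and then polarize $-L_k^{-1}\langle g(x_{k+1}),g(x_k)\rangle_{H_{xx}}$ using $g=-\nabla f$; your signs all check out, and, as in the paper, the identity does not need the balanced choice $\alpha_k^2=1/L_k$.
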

\begin{proof}
Substitute the secant relations
$$
\begin{aligned}
\nabla F(x_{k+1})-\nabla F(x_k)&=H_{xx}(x_{k+1}-x_k),\\
\nabla F(x_{k+1})-\nabla F(y_{k+1})&=H_{xy}(x_{k+1}-y_{k+1})
\end{aligned}
$$
into \eqref{eq:monitor-fixed-residual}, and use the coupled $x$-update in the form
$$
x_{k+1}-x_k=-\frac1{L_k}g(x_k)+\alpha_k(y_k-y_{k+1})+\alpha_k(y_{k+1}-x_{k+1}).
$$
The pairings with $y_{k+1}-x_{k+1}$ cancel except for the metric difference:
$$
\begin{aligned}
\mathcal R_k={}&-\frac1{L_k}\langle g(x_{k+1}),g(x_k)\rangle_{H_{xx}}-D_f(v_k,v_{k+1})\\
&+\alpha_k\langle g(x_{k+1}),y_k-y_{k+1}\rangle_{H_{xx}}-D_F(y_k,y_{k+1})\\
&+\alpha_k\bigl[\langle g(x_{k+1}),(H_{xy}-H_{xx})(x_{k+1}-y_{k+1})\rangle
-D_F(x_{k+1},y_{k+1})\bigr].
\end{aligned}
$$
Polarizing the first pairing and using $g(x) = - \nabla f(v)$ gives \eqref{eq:monitor-grouped-residual}.
\end{proof}
These metrics are used only in the analysis; the algorithm does not evaluate them. 

\begin{example}[Small balanced steps can fail]\label{ex:balanced-failure}
Let $F(x)=e^x$, $T_\sigma(x)=\frac12(x-3)^2$, and $(x_k,y_k)=(0,1)$. Then $x^{++}=3-e^x$, and the coupled trial with $\alpha_k=\alpha=L_k^{-1/2}$ has
$$
x_{k+1}=\frac{\alpha+2\alpha^2}{1+\alpha},\qquad
\mathcal R_k(y_{k+1}(\alpha))=(2e-5)\alpha+O(\alpha^2).
$$
Since $2e-5>0$, every sufficiently small balanced step fails the residual test. Thus, with $\alpha_k=1/\sqrt{L_k}$, increasing $L_k$ alone need not guarantee acceptance: every sufficiently large curvature fails at this state.
\end{example}

A possible modification separates the curvature search from a search for $\alpha_k=\theta/\sqrt{L_k}$ with $\theta \in (0,1)$. However, changing either parameter changes $x_{k+1}$ in \eqref{eq:acc-scheme}, hence its oracle and secant metrics. A curvature test at one value of $\alpha_k$ cannot be retained automatically when $\alpha_k$ changes. We therefore retain a single balanced trial followed, if needed, by the certified mirror step.

\subsection{Local accelerated rate}
Assume $F$ is $C^2$ near $x^\star$ with positive definite Hessian and $T_\sigma^*$ is $C^2$ near $v^\star$. Define the local metric and relative curvature by
\begin{equation}\label{eq:local-relative-curvature}
\begin{aligned}
H_\star&:=\nabla^2F(x^\star),\\
L_\star&:=\lambda_{\max}\bigl(H_\star\nabla^2f(v^\star)\bigr)
=1+\lambda_{\max}\bigl(H_\star\nabla^2T_\sigma^*(v^\star)\bigr).
\end{aligned}
\end{equation}
Thus $L_\star\geq1$ and $\nabla^2f(v^\star)\preceq L_\star H_\star^{-1}$. We first show that balanced trials pass locally above a threshold $L_{\rm acc}>L_\star$. We then count the curvature increases and decreases to obtain a positive fraction of balanced steps and an accelerated rate.

If $F$ and $T_\sigma$ are quadratic, then $H_{xx}=H_{xy}=H_\star$ and there is no metric mismatch. The quadratic dual satisfies
$$
D_f(v_k,v_{k+1})\geq\frac1{2L_\star} \|\nabla f(v_{k+1})-\nabla f(v_k)\|_{H_\star}^2.
$$
Young's inequality in \eqref{eq:monitor-grouped-residual}, with $\alpha_k^2=1/L_k$, therefore gives
$$
\begin{aligned}
\mathcal R_k\leq{}&-\frac1{2L_k}\|g(x_k)\|_{H_\star}^2 -\left(\frac1{2L_\star}-\frac1{2L_k}\right)
\|g(x_{k+1})-g(x_k)\|_{H_\star}^2.
\end{aligned}
$$
Thus every balanced trial with $L_k\geq L_\star$ passes, globally. For nonquadratic functions, we require a strict margin and a neighborhood of the solution.

\begin{proposition}[Local acceptance of balanced trials]\label{lem:balanced-acceptance}\label{prop:monitor-local}
Fix $L_\star<L_{\rm acc}\leq L_{\max}$. There is a neighborhood of $(x^\star,x^\star)$ in which every balanced trial with $L_k\in[L_{\rm acc},L_{\max}]$  satisfies $\mathcal R_k\leq0$, strictly unless $(x_k,y_k)=(x^\star,x^\star)$.
\end{proposition}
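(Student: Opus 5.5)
We start from the grouped identity \eqref{eq:monitor-grouped-residual} of Lemma~\ref{lem:monitor-secant}, with $\alpha_k^2=1/L_k$. The aim is to show that, near $(x^\star,x^\star)$, the right-hand side is bounded above by $-c\,\bigl(\|g(x_k)\|^2+\|g(x_{k+1})-g(x_k)\|^2+\|y_k-y_{k+1}\|^2+\|x_{k+1}-y_{k+1}\|^2\bigr)$ for some $c>0$. The constant $c$ will depend only on the margin $L_{\rm acc}-L_\star$ and on $L_{\max}$. This is the nonquadratic analogue of the quadratic computation given just before the proposition. In that computation the same four groups appear, and the only differences now are that the metrics $H_{xx},H_{xy}$ and $\nabla^2 f$ vary. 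The plan is to freeze them at $H_\star$ and $\nabla^2 f(v^\star)$, and to absorb the perturbation into the strict margin.

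\textbf{Step 1 (localization).} Continuity of the trial map \eqref{eq:acc-scheme} in $(x_k,y_k)$, uniformly for $L_k\in[L_{\rm acc},L_{\max}]$, is used first. Since $(x^\star,x^\star)$ is a fixed point for every $L_k$, given $\delta>0$ there is a neighborhood in which $x_k,x_{k+1},y_k,y_{k+1},x_k^{++},x_{k+1}^{++}$ and all segments in \eqref{eq:monitor-secant-metric} lie within $\delta$ of $x^\star$, and $v_k,v_{k+1}$ lie within $\delta$ of $v^\star$. The $C^2$ assumptions then give $\|H_{xx}-H_\star\|,\|H_{xy}-H_\star\|\le\omega(\delta)$ with $\omega(\delta)\to0$, and the dual Hessians on $[v_k,v_{k+1}]$ lie within $\omega(\delta)$ of $\nabla^2 f(v^\star)$.

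\textbf{Step 2 (bound each group).} For the first bracket, write $D_f(v_k,v_{k+1})$ with an integral Hessian $M$ of $f$, and write $\nabla f(v_{k+1})-\nabla f(v_k)=M(v_{k+1}-v_k)$. With $u=v_{k+1}-v_k$, we then have $D_f\ge\frac12 u^\top M_0 u$, where $M_0$ is the weighted integral Hessian, close to $\nabla^2f(v^\star)$, and $\|Mu\|^2_{H_{xx}}$ is close to $u^\top \nabla^2f(v^\star)H_\star\nabla^2 f(v^\star)u\le L_\star\, u^\top\nabla^2 f(v^\star)u$. Because $1/L_k\le 1/L_{\rm acc}<1/L_\star$, the bracket is at least $\frac12(L_{\rm acc}^{-1}\cdot(L_{\rm acc}-L_\star)/L_\star-O(\omega))\|Mu\|^2$. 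This is a fixed negative multiple of $\|g(x_{k+1})-g(x_k)\|^2$. For the second bracket, write $-D_F(y_k,y_{k+1})\le-\frac12(1-O(\omega))\|y_k-y_{k+1}\|^2_{H_\star}$. Then split $g(x_{k+1})=g(x_k)+(g(x_{k+1})-g(x_k))$ and apply Young's inequality with $\alpha_k^2=1/L_k$. The $g(x_k)$ part is absorbed by $-\frac1{2L_k}\|g(x_k)\|^2$ together with $-\frac1{2L_k}\|g(x_{k+1})\|^2$, exactly as in the quadratic case. The leftover difference term is charged to the margin from the first bracket. For the third bracket, $|\langle g(x_{k+1}),(H_{xy}-H_{xx})(x_{k+1}-y_{k+1})\rangle|\le 2\omega\|g(x_{k+1})\|\,\|x_{k+1}-y_{k+1}\|$, while $D_F(x_{k+1},y_{k+1})\ge\frac\mu2\|x_{k+1}-y_{k+1}\|^2$. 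Young's inequality leaves $O(\omega^2)\alpha_k\|g(x_{k+1})\|^2$, which is absorbed by the negative $g$ terms once $\omega$ is small, using $\alpha_k\le1$ and $L_k\le L_{\max}$.

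\textbf{Step 3 (strictness).} Choosing $\delta$ so that all $O(\omega)$ losses are below half of the respective margins gives $\mathcal R_k\le -c\bigl(\|g(x_k)\|^2+\|x_{k+1}-y_{k+1}\|^2\bigr)\le0$. If $\mathcal R_k=0$, then $g(x_k)=0$, so $x_k=x^\star$, and $x_{k+1}=y_{k+1}$. We would then check that this forces $y_k=x^\star$ through \eqref{eq:acc-scheme}, which is a linear computation at the fixed point; if needed, the term $\|y_k-y_{k+1}\|^2$ can also be retained in the bound. The main obstacle is Step 2's bookkeeping in the second bracket. There the Young split must use exactly $\alpha_k^2=1/L_k$ with no slack, so all the slack has to come from the dual margin $1/L_\star-1/L_k$. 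Making this work requires the uniform comparison $\|Mu\|^2_{H_{xx}}\le(L_\star+O(\omega))\,2D_f(v_k,v_{k+1})$, which is where nondegeneracy ($H_\star\succ0$ and $C^2$ regularity) is essential.
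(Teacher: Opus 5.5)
Your proposal is correct and follows essentially the same route as the paper. Both start from the grouped identity \eqref{eq:monitor-grouped-residual} and localize so that $H_{xx},H_{xy}\to H_\star$. Both use the strict margin $L_{\rm acc}>L_\star$ to get a dual co-coercivity bound on the first bracket, apply Young's inequality with $\alpha_k^2=1/L_k$ to the last two brackets, and absorb the leftover $O(\epsilon)\|g(x_{k+1})\|^2$ terms through $\|g(x_{k+1})\|^2\le2\|g(x_k)\|^2+2\|g(x_{k+1})-g(x_k)\|^2$, with $L_k\le L_{\max}$ used for the third bracket.

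For strictness, the paper uses the retained $\|g(x_{k+1})-g(x_k)\|^2$ term to get $g(x_k)=g(x_{k+1})=0$, hence $x_k=x_{k+1}=x^\star$ and then $y_k=x^\star$ from the $x$-update. That is cleaner than your proposed ``linear computation'', which is not actually linear because $x_{k+1}^{++}$ depends nonlinearly on $x_{k+1}$. Your fallback of also retaining $\|y_k-y_{k+1}\|^2$ does work.
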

\begin{proof}
As the state approaches the solution, the coupled trial and its oracle also converge, uniformly on the curvature interval. Hence $H_{xx},H_{xy}\to H_\star$. Taylor expansion of $f$ at $v^\star$, using $\nabla f(v_j)=-g(x_j)$ and \eqref{eq:local-relative-curvature}, gives locally
\begin{equation}\label{eq:balanced-cocoercivity}
D_f(v_k,v_{k+1})\geq\frac1{L_\star+L_{\rm acc}}
\| \nabla f(v_{k+1})-\nabla f(v_k) \|_{H_{xx}}^2.
\end{equation}
Indeed, the leading quadratic forms are $\frac12\nabla^2f(v^\star)$ and $\nabla^2f(v^\star)H_\star\nabla^2f(v^\star)$, respectively; the latter is bounded by $L_\star\nabla^2f(v^\star)$, and $L_\star+L_{\rm acc}>2L_\star$ leaves a strict margin.

Choose $\epsilon>0$ small enough as specified below. By shrinking the neighborhood, Hessian continuity gives
$$
\begin{aligned}
D_F(y_k,y_{k+1})&\geq\frac{1-\epsilon}{2}\|y_k-y_{k+1}\|_{H_{xx}}^2,\\
D_F(x_{k+1},y_{k+1})&\geq\frac{1-\epsilon}{2}\|x_{k+1}-y_{k+1}\|_{H_{xx}}^2,\\
\|H_{xx}^{-1/2}(H_{xy}-H_{xx})H_{xx}^{-1/2}\|_{\rm op}&\leq\epsilon.
\end{aligned}
$$
Apply Young's inequality to the last two brackets of \eqref{eq:monitor-grouped-residual} and recall that $\alpha_k^2=1/L_k$. With \eqref{eq:balanced-cocoercivity}, this yields
\begin{equation}\label{eq:balanced-residual-bound}
\begin{aligned}
\mathcal R_k\leq{}&-\frac1{2L_k}\|g(x_k)\|_{H_{xx}}^2\\
&-\left(\frac1{L_\star+L_{\rm acc}}-\frac1{2L_k}\right)
\|g(x_{k+1})-g(x_k)\|_{H_{xx}}^2\\
&+\frac{\epsilon+\epsilon^2\sqrt{L_k}}{2L_k(1-\epsilon)}\|g(x_{k+1})\|_{H_{xx}}^2.
\end{aligned}
\end{equation}
To absorb the positive term uniformly for $L_k\in[L_{\rm acc},L_{\max}]$, use
$$
\|g(x_{k+1})\|_{H_{xx}}^2\leq2\|g(x_k)\|_{H_{xx}}^2
+2\|g(x_{k+1})-g(x_k)\|_{H_{xx}}^2,
$$
and choose
$$
0<\epsilon\leq\min\left\{\frac1{\sqrt{L_{\max}}},
\frac{L_{\rm acc}-L_\star}{16(L_{\rm acc}+L_\star)}\right\}.
$$
Indeed, uniformly on the curvature interval,
$$
\frac{\epsilon+\epsilon^2\sqrt{L_k}}{1-\epsilon}
\leq4\epsilon\leq\frac{L_{\rm acc}-L_\star}{4(L_{\rm acc}+L_\star)},
$$
so the last term is absorbed with a strict margin. The result is strictly negative unless $g(x_k)=g(x_{k+1})=0$. Then $x_k=x_{k+1}=x^\star$, and the coupled $x$-update forces $y_k=x^\star$.
\end{proof}

The threshold $L_{\rm acc}$ is used only in the proof; the algorithm does not need to know it or $L_\star$. A larger threshold permits a larger Hessian tolerance $\epsilon$ in the acceptance proof, up to $1/\sqrt{L_{\max}}$, but gives a weaker accelerated-rate constant.

Once the states enter the acceptance neighborhood, a failed balanced trial raises curvature by at least $r$, while an accepted trial reduces it by $\rho$, down to the floor $1$. The cap already bounds the number of increases. To obtain accelerated dependence on $L_\star$, we first sharpen the curvature bound to $2rL_{\rm acc}$ and then count these changes. The sufficient cap condition below ensures that failed trials retain the increase by $r$.

\begin{theorem}[Local accelerated convergence]\label{thm:acc-local}
Assume {\rm(A)} and the local smoothness conditions above, with $L_{\max}\geq\max\{L_0,2L,2rL_\star\}$. Fix $L_\star<L_{\rm acc}\leq2L_\star$. Along any infinite trajectory of Algorithm~\ref{alg:aod}, there is an index $N$ such that $L_k\leq2rL_{\rm acc}$ for every $k\geq N$. If $A_k$ counts accepted balanced trials among iterations $N,\ldots,k-1$, then, for $k\geq N$,
\begin{equation}\label{eq:md-count}
A_k\geq\max\left\{0,
\frac{(k-N)\log r-\log(2r L_{\rm acc})}{\log(r/\rho)}\right\},
\end{equation}
and
\begin{equation}\label{eq:acc-local-energy-rate}
\mathcal E_k\leq\Bigl(1+\frac1{\sqrt{2r L_{\rm acc}}}\Bigr)^{-A_k}\mathcal E_N.
\end{equation}
\end{theorem}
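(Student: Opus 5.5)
The proof has three stages. First, Theorem~\ref{thm:aod-global} puts the trajectory into the acceptance neighborhood. Second, the curvature is trapped below $2rL_{\rm acc}$. Third, the balanced steps are counted.

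\emph{Entering the neighborhood.} By Theorem~\ref{thm:aod-global}, $\mathcal E_k\to0$ and $x_k,y_k\to x^\star$. Hence there is $N_0$ such that $(x_k,y_k)$ lies in the neighborhood of Proposition~\ref{prop:monitor-local} for all $k\geq N_0$. This uses $L_{\rm acc}\leq 2L_\star\leq L_{\max}$, which follows from the cap hypothesis $L_{\max}\geq 2rL_\star$.

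\emph{Trapping the curvature.} On this tail, a balanced trial with $L_k\geq L_{\rm acc}$ is accepted, and then $L_{k+1}=\max\{1,\rho L_k\}<L_k$. A failed trial occurs only when $L_k<L_{\rm acc}$. In that case the doubling search either leaves $L_k$ unchanged or stops at some $L_k<2L$. To control the tail I will use the sharper bound $L_k<2\max\{L_{\rm acc},\cdot\}$. The key point is that, near $x^\star$, the local relative smoothness constant on the tested pairs is at most $L_{\rm acc}$, by continuity of the Hessians. So once the neighborhood is small, the tested curvature passes for every $L_k\geq L_{\rm acc}$, and doubling from a value below $L_{\rm acc}$ ends below $2L_{\rm acc}$. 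The update $L_{k+1}=\min\{rL_k,L_{\max}\}$ then gives $L_{k+1}\leq 2rL_{\rm acc}$.

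It follows that once $L_k\leq 2rL_{\rm acc}$ at some tail index, it stays there. If $L_k$ exceeds this bound at $N_0$, the iterations from $N_0$ on are all accepted trials until $L_k$ drops below $L_{\rm acc}$. Each such trial multiplies $L_k$ by $\rho$, so this takes finitely many steps. Let $N$ be the first tail index with $L_N\leq 2rL_{\rm acc}$.

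\emph{Counting.} On iterations $N,\dots,k-1$, let $B_k$ denote the number of fallback steps, so $B_k=(k-N)-A_k$. Each fallback multiplies the curvature by at least $r$. This holds because $rL_k\leq r\cdot 2L_{\rm acc}\leq 4rL_\star$, and I must check that the cap does not bind. Here I would use $L_{\max}\geq 2rL_\star$ together with $L_{\rm acc}$ close to $L_\star$, and I expect this to be the main technical obstacle. If the cap does bind, I would fall back on the weaker argument that a fallback occurring at $L_k<L_{\rm acc}$ produces $\min\{rL_k,L_{\max}\}\geq rL_k$ whenever $rL_{\rm acc}\leq L_{\max}$. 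Each accepted step multiplies the curvature by at least $\rho$; the floor at $1$ only helps, since the curvature is $\geq1$ anyway.

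These multiplicative bounds give
\[
1\leq L_k\leq 2rL_{\rm acc}\quad\text{and}\quad L_k\geq L_N\,r^{B_k}\rho^{A_k}\geq r^{B_k}\rho^{A_k}.
\]
Taking logarithms and substituting $B_k=(k-N)-A_k$ yields
\[
(k-N-A_k)\log r+A_k\log\rho\leq \log(2rL_{\rm acc}),
\]
which rearranges to \eqref{eq:md-count}.

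\emph{Energy rate.} Every iteration satisfies $\mathcal E_{j+1}\leq\mathcal E_j$, by Lemma~\ref{lem:aod-descent-alpha}. Each accepted balanced trial contracts the energy by $(1+1/\sqrt{L_j})^{-1}\leq(1+1/\sqrt{2rL_{\rm acc}})^{-1}$, using $L_j\leq 2rL_{\rm acc}$. Multiplying these factors over the accepted steps gives \eqref{eq:acc-local-energy-rate}.
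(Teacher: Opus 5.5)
Your proposal is correct and follows the paper's proof essentially step for step. Global convergence puts the states in the acceptance neighborhood, local relative smoothness with constant $L_{\rm acc}$ traps the curvature below $2rL_{\rm acc}$, the bound $r^{k-N-A_k}\rho^{A_k}\leq L_k\leq 2rL_{\rm acc}$ gives the count, and multiplying the certificates gives the rate.

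The cap issue you leave open is settled by your second (fallback) argument, which is exactly the paper's. The doubling search only raises the start-of-iteration value $L_k<L_{\rm acc}$ to a final search curvature $L_k^{\rm fin}$. Hence $L_{k+1}=\min\{rL_k^{\rm fin},L_{\max}\}\geq\min\{rL_k,L_{\max}\}=rL_k$, because $rL_{\rm acc}\leq 2rL_\star\leq L_{\max}$ holds automatically; this is why the theorem imposes $L_{\rm acc}\leq 2L_\star$. Your first attempt via $rL_k^{\rm fin}$ is unnecessary, and that quantity can indeed be clipped by the cap.
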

\begin{proof}
Here $L_k$ denotes the curvature at the start of iteration $k$. By global convergence, the states eventually stay in the neighborhood of Proposition~\ref{prop:monitor-local}, where balanced trials pass whenever $L_k\geq L_{\rm acc}$. The strict inequality $L_{\rm acc}>L_\star$ and Hessian continuity also give
\begin{equation}\label{eq:Lacc}
\nabla^2f(v)\prec L_{\rm acc}\nabla^2F_-^*(v)
\qquad\text{for }v\text{ sufficiently close to }v^\star.
\end{equation}
Integrating this inequality on a small convex dual neighborhood gives relative smoothness with constant $L_{\rm acc}$. Since $g(x_k)\to0$, the mirror candidates $x_k-L_k^{-1}g(x_k)$ approach $x^\star$ uniformly for all search curvatures $L_k\geq1$. Thus, eventually, a mirror test passes whenever its search curvature is at least $L_{\rm acc}$.

If $L_k>2rL_{\rm acc}$, the balanced trial passes and $L_{k+1}=\rho L_k$ until the curvature falls below $2rL_{\rm acc}$. By increasing the index $N$ if necessary, we may therefore assume that the states stay in the local neighborhood and $L_N\leq2rL_{\rm acc}$. We now show that both branches preserve this bound.

If the balanced trial passes, then
$$
L_{k+1}=\max\{1,\rho L_k\}\leq2rL_{\rm acc},
\qquad L_{k+1}\geq\rho L_k.
$$
The curvature may reach the floor $1$; the second inequality still holds. If the balanced trial fails, then $L_k<L_{\rm acc}$. Its mirror search ends below $2L_{\rm acc}$: either the first test passes, or the last doubling starts below $L_{\rm acc}$. Since $L_{\max}\geq2rL_\star\geq rL_{\rm acc}>rL_k$, the subsequent update, even if clipped, satisfies
$$
rL_k\leq L_{k+1}<2rL_{\rm acc}.
$$
Thus $L_k\leq2rL_{\rm acc}$ for every $k\geq N$.

Among the $k-N$ iterations starting at $N$, exactly $A_k$ accept balanced trials. Multiplying the lower bounds from the two branches gives
$$
2rL_{\rm acc}\geq L_k\geq
r^{\,k-N-A_k}\rho^{A_k}L_N\geq r^{\,k-N-A_k}\rho^{A_k}.
$$
Taking logarithms and using $A_k\geq0$ proves \eqref{eq:md-count}. Every accepted balanced trial has $\alpha_k=L_k^{-1/2}\geq(2rL_{\rm acc})^{-1/2}$, while mirror steps also decrease the energy. Multiplying the one-step certificates \eqref{eq:aod-step} gives \eqref{eq:acc-local-energy-rate}.
\end{proof}

In particular,
$$
\liminf_{k\to\infty}\frac{A_k}{k-N}\geq\frac{\log r}{\log(r/\rho)}.
$$
For the defaults $r=1.25$ and $\rho=0.98$, this fraction is approximately $0.917$: asymptotically, at least $91.7\%$ of iterations are balanced steps and at most $8.3\%$ use the base branch. The finite-iteration bound \eqref{eq:md-count} includes the initial counting correction.

For fixed $r$ and $\rho$, \eqref{eq:acc-local-energy-rate} gives $O\bigl(\sqrt{L_\star}\log(1/\varepsilon)\bigr)$ iterations as $\varepsilon\downarrow0$, after the finite initial phase. Both objective gaps share this rate. The stopping gap does too: local smoothness of $F$ and Lipschitz continuity of the oracle give $D_F(x_k^{++},x_k)=O(\|x_k-x^\star\|^2)=O(\mathcal E_k)$, using the norm estimate in the global convergence proof.

\section{Numerical experiments}\label{sec:numerics}

We compare AHNAG with adaptive GCG to measure the gain from acceleration beyond curvature adaptation. GCG and HNAG use fixed $L=L_\star$ as reference methods. Adaptive GCG uses Algorithm~\ref{alg:base-backtracking} with $L_0=1$; AHNAG uses Algorithm~\ref{alg:aod} with $L_0=2$, $\rho=0.98$, and $r=1.25$. All methods share the same initial iterate. Adaptive GCG restarts from the ratio in \eqref{eq:monitor-measured-curvature}, whereas AHNAG uses $\rho$ and $r$ to track the curvature at which balanced trials pass.

A \emph{balanced trial} uses $\alpha_k=1/\sqrt{L_k}$; the \emph{base branch} is the mirror-descent fallback. One \emph{oracle call} evaluates both $T_\sigma^*(v)$ and $\nabla T_\sigma^*(v)$. This is the dominant cost: on a one-dimensional grid with $m$ cells, each $T_\sigma^*(v)$ and $\nabla T_\sigma^*(v)$ costs $O(m^2)$, whereas evaluating $F$ and $\nabla F$ costs $O(m\log m)$ through an FFT-based Toeplitz product for granular media or a discrete cosine transform for PNP. In these experiments, the value and gradient require two dense log-domain reductions per species, so a value-only evaluation counts as half a call. Let $n_{\rm b}$ denote the mirror trials. An AHNAG fallback with $n_{\rm b}\geq1$ mirror trials costs $2+(n_{\rm b}-1)/2$ calls. Adaptive GCG costs $1+(n_{\rm b}-1)/2$ calls per iteration. We report total calls, with the cost of rejected trials in parentheses, and round these counts up to integers. The additional evaluations of $F(y_{k+1})$ and the stopping gap require only values of $F$.

NR means that the stopping tolerance was not reached within the iteration cap. CPU times are from single-threaded runs on an Apple M4 processor, timed in one session: the best of three runs in one dimension and one run in three dimensions. Oracle calls are the primary comparison, since times below one second are sensitive to timing noise.

\subsection{Problem setting}
{The JKO method advances Wasserstein gradient flows through successive minimization problems \cite{JKO1998}. Efficient minimization is central to its use for the granular-media and PNP models below.} We solve entropically regularized JKO steps \cite{Peyre,Carlier}:
\begin{equation}\label{eq:num-jko}
  \rho^{k+1}=\argmin_\rho\ \bigl\{\mathrm{OT}_\varepsilon(\rho^k,\rho)/\tau+F(\rho)\bigr\}.
\end{equation}
{Here $\tau>0$ is the JKO {step parameter}, and $\mathrm{OT}_\varepsilon$ is the discrete transport cost with entropy regularization of strength $\varepsilon>0$:
$$
\mathrm{OT}_\varepsilon(a,b)=
\min_{\substack{P\in\mathbb R_+^{m\times m}\\P\one=a,\ P^\top\one=b}}
\sum_{i=1}^m\sum_{j=1}^m\bigl[C_{ij}P_{ij}+\varepsilon P_{ij}(\log P_{ij}-1)\bigr],
$$
where $a,b\in\mathbb R_+^m$ have equal total mass and $C\in\mathbb R^{m\times m}$ is the ground-cost matrix{, $C_{ij}=|x_i-x_j|^2$ on the grid points,} and $0\log0=0$.} {Since the JKO scheme \cite{JKO1998} weights $W_2^2$ by $1/(2h)$ for the time step $h$, \eqref{eq:num-jko} with $\varepsilon\to0$ advances the gradient flow by $h=\tau/2$; we report the parameter $\tau$ throughout.}
{For one JKO step, the regularizer is
$$
T_\sigma(\rho)=\mathrm{OT}_\varepsilon(\rho^k,\rho)/\tau,
$$
with $\rho^k$ fixed. Here $\rho$ denotes the vector of cell masses.

Maximizing over transport plans with the fixed row marginal gives, up to a constant independent of $v$,
$$
\begin{aligned}
T_\sigma^*(v)&=\frac{\varepsilon}{\tau}\sum_{i=1}^m\rho_i^k\log\left(\sum_{j=1}^m e^{(\tau v_j-C_{ij})/\varepsilon}\right),\\
\nabla T_\sigma^*(v)&=\sum_{i=1}^m\rho_i^k\operatorname{softmax}\left(\frac{\tau v-C_{i,:}}{\varepsilon}\right),
\qquad \operatorname{softmax}(z)_j=\frac{e^{z_j}}{\sum_l e^{z_l}}.
\end{aligned}
$$
Here $C_{i,:}$ is the $i$th cost row. The value and gradient require two log-domain reductions and no inner optimization solve. Write $\operatorname{logsumexp}_i(a_i)=\log\sum_i e^{a_i}$. The first reduction computes $\log Z_i=\operatorname{logsumexp}_j((\tau v_j-C_{ij})/\varepsilon)$ and hence the conjugate value. With $u_i=\log\rho_i^k-\log Z_i$, the second gives
$$
\nabla T_\sigma^*(v)_j=\exp\left(\frac{\tau v_j}{\varepsilon}+\operatorname{logsumexp}_i\left(u_i-\frac{C_{ij}}{\varepsilon}\right)\right).
$$
Log-sum-exp is evaluated stably by subtracting the largest entry before exponentiation. For finite costs and positive total mass, the oracle returns positive cell masses with the same total mass as $\rho^k$. Mass conservation gives $\nabla^2T_\sigma^*(v)\one=0$, while $\nabla^2f(v^\star)=[\nabla^2F(x^\star)]^{-1}+\nabla^2T_\sigma^*(v^\star)$ remains positive definite.}

{All methods solve the same discrete JKO problem. Runs stop when $D_F(x_k^{++},x_k)\leq10^{-6}$ and return $x_k^{++}$, whose objective error is certified by Proposition~\ref{prop:acc-primal-dual-gap}. The plots show the different quantity $D_F(x_k,x^\star)$.}

{In one dimension, reference solutions $x^\star$ are computed by a long solver run followed by damped Newton iteration. Their computed gaps satisfy $D_F(x^{++},x)\leq10^{-15}$.} The fixed-$L$ methods use the local curvature $L_\star$ from \eqref{eq:local-relative-curvature}, computed offline. This pointwise value is an experimental parameter, not a verified bound along the trajectory. {Each single-step experiment starts from a random iterate whose entries are independent $U(0.1,1)$ samples normalized to unit mass per species (seed $0$), shared by all methods.}

We vary $\tau$. Larger values weaken the transport regularizer and reduce the entropic weight $\varepsilon/\tau$. In the reported experiments, they also increase $L_\star$, testing the methods over a range of curvatures. The minimum mass that must be redistributed is $\mathrm{TV}(\rho^k,x^\star)=\tfrac12\|x^\star-\rho^k\|_1$.

\subsection{A granular-media energy}\label{sec:num-granular}

We use the confinement--diffusion--interaction energy \cite{McCann1997,Otto2001,CHPW2017} on $m$ grid points $x_j=-R+(j-1)h$, $j=1,\ldots,m$, with $h=2R/(m-1)$:
\begin{equation}\label{eq:num-granular-F}
F(\rho)=\langle V,\rho\rangle+h\sum_{j=1}^m U(\rho_j/h)
+\tfrac12\lambda\,\rho^\top W\rho,
\end{equation}
with $V(x)=x^2/2$, $U(s)=s^2$, and the Gaussian kernel
\begin{equation}\label{eq:num-granular-W}
W_{jl}=\exp\Bigl(-\frac{(x_j-x_l)^2}{2\sigma_W^2}\Bigr).
\end{equation}
The associated Wasserstein flow is $\partial_t\rho=\nabla\cdot(\rho\nabla(V+U'(\rho)+\lambda W*\rho))$. We take $m=128$, $R=6$, $\varepsilon=10^{-3}$, and $\lambda=\sigma_W=1$. The transport reference is a unit-mass Gaussian bump with mean $2$ and standard deviation $0.5$.

\begin{table}[tbp]
\centering
{\setlength{\tabcolsep}{3pt} \resizebox{0.96\textwidth}{!}{
\begin{tabular}{@{}rrr*{4}{@{\hspace{1.4em}}r@{\hspace{0.45em}}r}@{}}
\toprule
$\tau$ & $L_\star$ & TV & \multicolumn{2}{c}{GCG} & \multicolumn{2}{c}{Adaptive GCG} & \multicolumn{2}{c}{HNAG} & \multicolumn{2}{c}{AHNAG} \\
\cmidrule(lr){4-5}\cmidrule(lr){6-7}\cmidrule(lr){8-9}\cmidrule(lr){10-11}
& & & Calls & Time & Calls & Time & Calls & Time & Calls & Time \\
\midrule
$0.2$ & $242.1$ & $0.153$ & $1583$ & $0.70$ & $252\,(72)$ & $0.10$ & $137$ & $0.06$ & $107\,(8)$ & $0.04$ \\
$0.5$ & $573.0$ & $0.289$ & $3721$ & $1.66$ & $538\,(157)$ & $0.21$ & $210$ & $0.07$ & $168\,(10)$ & $0.06$ \\
$1$ & $1050.2$ & $0.429$ & $6798$ & $3.02$ & $989\,(281)$ & $0.38$ & $285$ & $0.12$ & $214\,(13)$ & $0.07$ \\
$2$ & $1983.9$ & $0.581$ & $12859$ & $5.40$ & $1818\,(519)$ & $0.71$ & $393$ & $0.16$ & $287\,(15)$ & $0.10$ \\
$5$ & $4671.4$ & $0.741$ & $30291$ & $12.44$ & $3674\,(1032)$ & $1.43$ & $602$ & $0.23$ & $429\,(21)$ & $0.15$ \\
$10$ & $9134.4$ & $0.814$ & \multicolumn{2}{c}{NR} & $7148\,(2037)$ & $2.77$ & $841$ & $0.32$ & $556\,(27)$ & $0.20$ \\
$50$ & $44645.3$ & $0.878$ & \multicolumn{2}{c}{NR} & $31467\,(8891)$ & $12.26$ & $1856$ & $0.76$ & $1143\,(41)$ & $0.42$ \\
$100$ & $88985.0$ & $0.886$ & \multicolumn{2}{c}{NR} & \multicolumn{2}{c}{NR} & $2626$ & $1.11$ & $1584\,(57)$ & $0.58$ \\
$200$ & $177329.2$ & $0.890$ & \multicolumn{2}{c}{NR} & \multicolumn{2}{c}{NR} & $3695$ & $1.55$ & $2108\,(68)$ & $0.77$ \\
\midrule
\multicolumn{3}{l}{Fitted exponent $p$} & $0.997$ & --- & $0.925$ & --- & $0.500$ & --- & $0.449$ & --- \\
\bottomrule
\end{tabular}}}
\caption{Granular-media example, $m=128$: oracle calls and CPU time (s) to reach $\mathrm{tol}=10^{-6}$, from the random initial iterate with seed $0$. The fit uses $\text{calls}\sim L_\star^p$ over the rows that reach the tolerance. The comparison slopes are $p=1$ and $p=1/2$, corresponding to nonaccelerated and accelerated scaling; the fits describe these experiments rather than uniform complexity bounds. HNAG uses fixed $L$; AHNAG is Algorithm~\ref{alg:aod}. NR: not reached within $40000$ iterations, with the error still decreasing. For both adaptive methods, parentheses give backtracking calls.}
\label{tab:num-sweep-granular}
\end{table}

\begin{figure}[tbp]
\centering
\includegraphics[width=0.9\textwidth,height = 4.5cm]{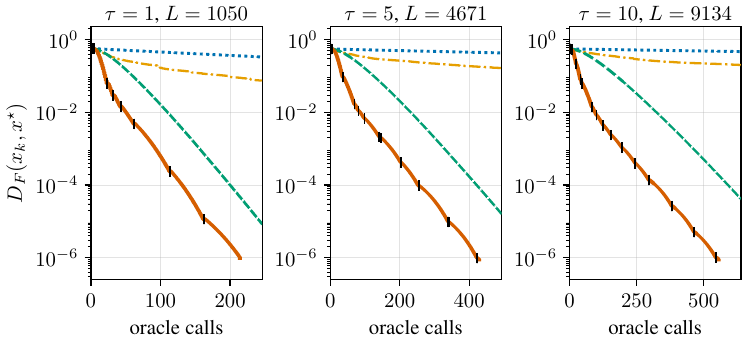}
\caption{{Granular-media example at $m=128$: true error $D_F(x_k,x^\star)$ against oracle calls, at $\tau=1$, $5$ and $10$, from the random initial iterate with seed $0$. Curves: GCG\,\raisebox{-0.1em}{\includegraphics{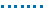}}, adaptive GCG\,\raisebox{-0.1em}{\includegraphics{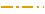}}, HNAG\,\raisebox{-0.1em}{\includegraphics{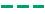}} and AHNAG\,\raisebox{-0.1em}{\includegraphics{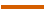}}. Vertical ticks\,\raisebox{-0.1em}{\includegraphics{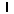}} mark the backtracking calls of AHNAG. Panels share a vertical range and end at $1.15$ times AHNAG's call count, truncating the slower methods.}}
\label{fig:num-granular}
\end{figure}

\begin{figure}[tbp]
\centering
\includegraphics[width=0.85\textwidth,height=4cm]{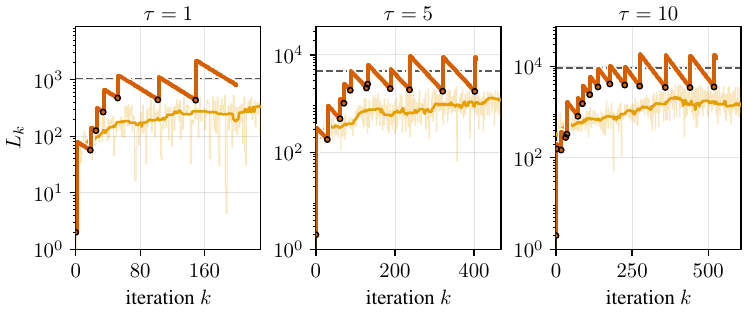}
\caption{Granular-media example at $m=128$: the curvature $L_k$\,\raisebox{-0.1em}{\includegraphics{figures/keys/ahnag}} of AHNAG against the iteration $k$, for the runs of Figure~\ref{fig:num-granular} ($\tau=1$, $5$ and $10$). Circles\,\raisebox{-0.1em}{\includegraphics{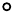}} mark mirror-descent iterations, at the value of $L_k$ before the doubling search. The dashed line\,\raisebox{-0.1em}{\includegraphics{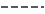}} is $L_\star$. Each panel has its own axes. The faint curve\,\raisebox{-0.1em}{\includegraphics{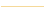}} is adaptive GCG up to $1.15$ times AHNAG's iteration count, and the solid one\,\raisebox{-0.1em}{\includegraphics{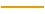}} its running median over $25$ iterations.}
\label{fig:num-Lk-granular}
\end{figure}

The fits in Table~\ref{tab:num-sweep-granular} suggest nearly linear growth in oracle calls for GCG and adaptive GCG and approximately square-root growth for the accelerated methods. The savings thus grow roughly like $\sqrt{L_\star}$. For $\tau\geq100$, adaptive GCG does not reach the tolerance within the cap. AHNAG reaches the tolerance at every tested $\tau$, allowing larger JKO steps within the same iteration budget.

Backtracking accounts for $3.2$--$7.0\%$ of AHNAG's oracle calls, with a smaller share at larger $\tau$. Balanced trials are accepted in $96$--$98\%$ of its iterations, consistent with the positive fraction guaranteed locally by Theorem~\ref{thm:acc-local}. The curvature settles at $0.64$--$0.99\,L_\star$. Figure~\ref{fig:num-granular} shows the error curves.

Figure~\ref{fig:num-Lk-granular} shows how AHNAG adjusts curvature. Starting from $L_0=2$, the first mirror search raises it to the order of $10^2$. Subsequent balanced steps decrease $L_k$ by $\rho$, increasing $\alpha_k=1/\sqrt{L_k}$ and strengthening the certified contraction. Failed trials raise the curvature again, producing a sawtooth pattern near $L_\star$. Adaptive GCG has more frequent fluctuations; its running median reveals a lower curvature scale. 

\subsection{Poisson--Nernst--Planck energy}\label{sec:num-pnp}

The second problem is the two-species Poisson--Nernst--Planck (PNP) model {in the bounded-domain setting of} \cite{KMX2017}. {Let $\rho_+$ and $\rho_-$ be the species cell masses and $\Phi$ the electric potential. On $m$ uniform cells, with $x_j=(j-\tfrac12)h$, $j=1,\ldots,m$, and $h=1/m$, they couple through}
\begin{equation}\label{eq:num-pnp-poisson}
{-\lambda^2\Phi''=(\rho_+-\rho_-)/h,\qquad \Phi'(0)=\Phi'(1)=0,\qquad \textstyle\int_0^1\Phi=0.}
\end{equation}
{The two species have equal mass, so the problem is solvable. Its cell-centered discretization with no-flux ghost values is diagonalized by the discrete cosine transform, so each evaluation of $F$ and $\nabla F$ costs one fast transform pair.} The energy is
\begin{equation}\label{eq:num-pnp-F}
\begin{aligned}
F(\rho_+,\rho_-)={}&\langle\rho_+,\log\rho_+\rangle+\langle\rho_-,\log\rho_-\rangle
+\langle\rho_+,U\rangle+\langle\rho_-,V\rangle\\
&+\tfrac12\langle\rho_+-\rho_-,G(\rho_+-\rho_-)\rangle,
\end{aligned}
\end{equation}
{where $G$ is the discrete Green's operator of \eqref{eq:num-pnp-poisson} and $U,V$ are external potentials evaluated at the cell centers.} The regularizer is the sum of one entropic transport cost per species. {Using $\rho_+$ and $\rho_-$ for the corresponding continuum densities, the system}
$$
{\partial_t\rho_+=\Delta\rho_++\nabla\cdot\bigl(\rho_+\nabla(U+\Phi)\bigr),\qquad
\partial_t\rho_-=\Delta\rho_-+\nabla\cdot\bigl(\rho_-\nabla(V-\Phi)\bigr),}
$$
{together with \eqref{eq:num-pnp-poisson} and no-flux conditions for $\rho_\pm$, is formally a gradient flow in the product Wasserstein metric. Kinderlehrer, Monsaingeon, and Xu \cite{KMX2017} establish this structure and construct weak solutions by the JKO scheme in $\mathbb R^d$, $d\geq3$, and note its extension to bounded domains with no-flux conditions on the densities and the potential and with external potentials satisfying $\partial_\nu U=\partial_\nu V=0$. We take $V=-U$ with $U(x)=10\cos^2(\pi x)$, so $U'(0)=U'(1)=0$. The potential acts like a negatively charged plane at $x=\tfrac12$: it attracts $\rho_+$ and repels $\rho_-$. Kinderlehrer, Monsaingeon, and Xu take $\lambda=1$; here $\lambda$ scales the Debye length.}

The Debye length adds stiffness through $G\sim\lambda^{-2}$. At $\tau=10^{-2}$, decreasing $\lambda^2$ from $10^{-2}$ to $10^{-4}$ raises $L_\star$ from {$9.7$ to $357$}. We use {$m=512$}, $\lambda^2=10^{-4}$, {and} $\varepsilon=1.25\times10^{-3}$. {The transport reference is neutral: both species are the unit-mass Gaussian bump with mean $0.5$ and standard deviation $0.1$.}

\begin{table}[tbp]
\centering
{\setlength{\tabcolsep}{3pt}
\resizebox{0.88\textwidth}{!}{
\begin{tabular}{@{}rr*{4}{@{\hspace{1.4em}}r@{\hspace{0.45em}}r}@{}}
\toprule
$\tau$ & $L_\star$ & \multicolumn{2}{c}{GCG} & \multicolumn{2}{c}{Adaptive GCG} & \multicolumn{2}{c}{HNAG} & \multicolumn{2}{c}{AHNAG} \\
\cmidrule(lr){3-4}\cmidrule(lr){5-6}\cmidrule(lr){7-8}\cmidrule(lr){9-10}
& & Calls & Time & Calls & Time & Calls & Time & Calls & Time \\
\midrule
$0.01$ & $357.1$ & $2287$ & $19.30$ & $349\,(100)$ & $2.90$ & $195$ & $1.64$ & $192\,(12)$ & $1.59$ \\
$0.02$ & $720.1$ & $3516$ & $29.57$ & $455\,(123)$ & $3.78$ & $257$ & $2.16$ & $224\,(12)$ & $1.86$ \\
$0.05$ & $1824.4$ & $5579$ & $46.42$ & $720\,(202)$ & $5.96$ & $378$ & $3.14$ & $314\,(16)$ & $2.60$ \\
$0.1$ & $3683.0$ & $7327$ & $59.74$ & $1124\,(315)$ & $9.21$ & $506$ & $4.11$ & $418\,(21)$ & $3.40$ \\
$0.2$ & $7416.9$ & \multicolumn{2}{c}{NR} & $2014\,(570)$ & $16.47$ & $658$ & $5.36$ & $576\,(27)$ & $4.67$ \\
$0.5$ & $18638.8$ & \multicolumn{2}{c}{NR} & $3413\,(973)$ & $28.07$ & $993$ & $8.12$ & $820\,(36)$ & $6.68$ \\
\midrule
\multicolumn{2}{l}{Fitted exponent $p$} & $0.499$ & --- & $0.593$ & --- & $0.410$ & --- & $0.378$ & --- \\
\bottomrule
\end{tabular}}}
\caption{PNP example, $m=512$ and $\lambda^2=10^{-4}$: oracle calls and CPU time (s) for one proximal step at $\mathrm{tol}=10^{-6}$, from the random initial iterate with seed $0$. HNAG uses fixed $L$; AHNAG is Algorithm~\ref{alg:aod}. NR: not reached within $8000$ iterations, with the error still decreasing. For AHNAG and adaptive GCG, the number in parentheses counts backtracking calls. The fit uses $\mathrm{calls}\sim L_\star^p$ over the converged rows: four for GCG and six for each other method.}
\label{tab:num-pnp-single}
\end{table}

\begin{figure}[tbp]
\centering
\includegraphics[width=0.9\textwidth,height=4.5cm]{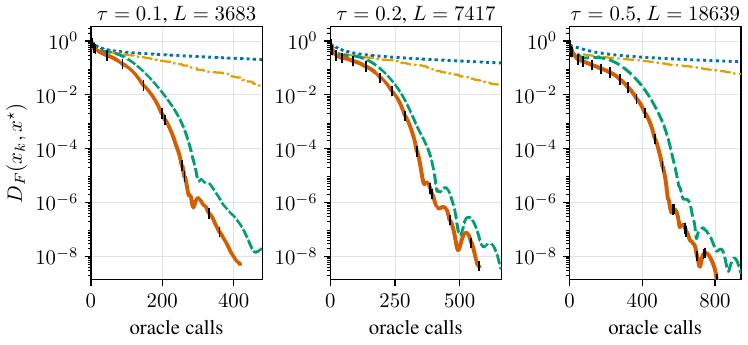}
\caption{{PNP example at $m=512$: true error $D_F(x_k,x^\star)$ against oracle calls, at $\tau=0.1$, $0.2$ and $0.5$, from the random initial iterate with seed $0$. Curves: GCG\,\raisebox{-0.1em}{\includegraphics{figures/keys/gcg}}, adaptive GCG\,\raisebox{-0.1em}{\includegraphics{figures/keys/agcg}}, HNAG\,\raisebox{-0.1em}{\includegraphics{figures/keys/hnag}} and AHNAG\,\raisebox{-0.1em}{\includegraphics{figures/keys/ahnag}}. Vertical ticks\,\raisebox{-0.1em}{\includegraphics{figures/keys/tick}} mark AHNAG's backtracking calls. Panels share a vertical range and end at $1.15$ times AHNAG's call count, truncating the slower methods.}}
\label{fig:num-pnp}
\end{figure}

In Table~\ref{tab:num-pnp-single}, AHNAG uses fewer calls than adaptive GCG and improves on HNAG without the offline curvature. The fitted exponents are below the comparison values $1$ and $1/2$. The smallest nontrivial eigenvalue of the local relative Hessian in \eqref{eq:local-relative-curvature} grows from $1.4$ to $9.8$ as $\tau$ increases. Thus its condition number grows more slowly than $L_\star$.

Backtracking accounts for $4.4$--$6.0\%$ of AHNAG's oracle calls, and $97\%$ of iterations accept the balanced trial. This finite-run observation is consistent with the local acceptance mechanism. The curvature settles at $0.70$--$0.80\,L_\star$.
The error curves in Figure~\ref{fig:num-pnp} need not be monotone: the contraction estimate applies to the combined state energy $\mathcal E_k$.

{We repeat this experiment on the box $\Omega=[0,1]^3$, with the same charged-center potential: $V=-U$ and $U(x,y,z)=S\bigl(\cos^2(\pi x)+\cos^2(\pi y)+\cos^2(\pi z)\bigr)$, $S=30$, so $\partial_\nu U=\partial_\nu V=0$ on $\partial\Omega$. The energy, discretization, and transport oracle are the three-dimensional analogues of \eqref{eq:num-pnp-poisson}--\eqref{eq:num-pnp-F}: the cell-centered Laplacian with homogeneous Neumann data is diagonalized by the three-dimensional discrete cosine transform, so evaluating $F$ and $\nabla F$ costs one fast transform pair. The Gibbs kernel of the transport cost is a Kronecker product of three one-dimensional kernels. Thus each log-sum-exp reduction in the oracle costs $O(n^4)$ instead of $O(n^6)$ without forming an $n^3\times n^3$ matrix.

We use $n=32$ uniform cells per axis, so each species has $32^3=32768$ unknowns, with $\lambda^2=3\times10^{-4}$, $\varepsilon=2\times10^{-3}$, and $\tau=0.05$, $0.2$, and $1$. The transport reference is neutral: both species are the unit-mass Gaussian bump centered at $(0.5,0.5,0.5)$ with standard deviation $0.12$. The starting iterate (seed $0$), the stopping test, and the parameters of AHNAG are as above.

The reference solution $x^\star$ is computed by an accelerated run to $D_F(x^{++},x)\leq10^{-12}$. To avoid forming a dense Hessian, we estimate $L_\star$ in \eqref{eq:local-relative-curvature} by matrix-free power iteration.}

\begin{table}[tbp]
\centering
{\setlength{\tabcolsep}{3pt} \resizebox{0.85\textwidth}{!}{
\begin{tabular}{@{}rr*{4}{@{\hspace{1.4em}}r@{\hspace{0.45em}}r}@{}}
\toprule
$\tau$ & $L_\star$ & \multicolumn{2}{c}{GCG} & \multicolumn{2}{c}{Adaptive GCG} & \multicolumn{2}{c}{HNAG} & \multicolumn{2}{c}{AHNAG} \\
\cmidrule(lr){3-4}\cmidrule(lr){5-6}\cmidrule(lr){7-8}\cmidrule(lr){9-10}
& & Calls & Time & Calls & Time & Calls & Time & Calls & Time \\
\midrule
$0.05$ & $1104.5$ & \multicolumn{2}{c}{NR} & $1252\,(312)$ & $135.32$ & \multicolumn{2}{c}{NR} & $613\,(21)$ & $66.01$ \\
$0.2$ & $2465.2$ & \multicolumn{2}{c}{NR} & $1968\,(519)$ & $212.39$ & \multicolumn{2}{c}{NR} & $861\,(26)$ & $91.99$ \\
$1$ & $10382.9$ & \multicolumn{2}{c}{NR} & \multicolumn{2}{c}{NR} & \multicolumn{2}{c}{NR} & $2076\,(54)$ & $221.41$ \\
\bottomrule
\end{tabular}}}
\caption{Three-dimensional PNP example, $n=32$, $\lambda^2=3\times10^{-4}$: oracle calls and CPU time (s) for one proximal step at $\mathrm{tol}=10^{-6}$, from the random initial iterate with seed $0$. NR: not reached within $2000$ iterations. For AHNAG and adaptive GCG, the number in parentheses counts backtracking calls.}
\label{tab:pnp3d}
\end{table}

\begin{figure}[tbp]
\centering
\includegraphics[width=0.9\textwidth,height=4.7cm]{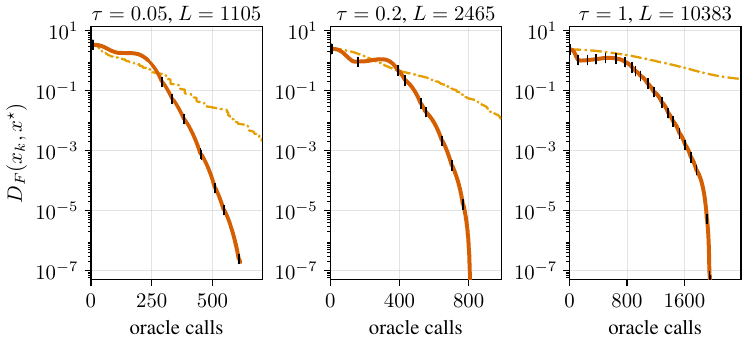}
\caption{{Three-dimensional PNP example, $n=32$: true error $D_F(x_k,x^\star)$ against oracle calls, at $\tau=0.05$, $0.2$ and $1$, from the same random starting iterate as Table~\ref{tab:pnp3d}. GCG and HNAG are omitted, since neither reaches the tolerance at any $\tau$ (Table~\ref{tab:pnp3d}). Curves: adaptive GCG\,\raisebox{-0.1em}{\includegraphics{figures/keys/agcg}} and AHNAG\,\raisebox{-0.1em}{\includegraphics{figures/keys/ahnag}}. Vertical ticks\,\raisebox{-0.1em}{\includegraphics{figures/keys/tick}} mark AHNAG's backtracking calls. Panels share a vertical range and end at $1.15$ times AHNAG's call count.}}
\label{fig:pnp3d}
\end{figure}

\begin{figure}[tbp]
\centering
\includegraphics[width=0.9\textwidth,height = 4.25cm]{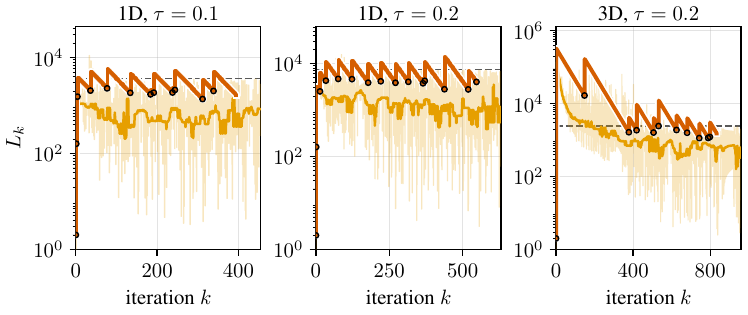}
\caption{PNP example: the curvature $L_k$\,\raisebox{-0.1em}{\includegraphics{figures/keys/ahnag}} of AHNAG against the iteration $k$, in one dimension ($\tau=0.1$ and $0.2$, the runs of Figure~\ref{fig:num-pnp}) and three dimensions ($\tau=0.2$, the run of Figure~\ref{fig:pnp3d}). Circles\,\raisebox{-0.1em}{\includegraphics{figures/keys/md}} mark mirror-descent iterations, at the value of $L_k$ before the doubling search. The dashed line\,\raisebox{-0.1em}{\includegraphics{figures/keys/lstar}} is $L_\star$. Each panel has its own axes. The faint curve\,\raisebox{-0.1em}{\includegraphics{figures/keys/agcg-faint}} is adaptive GCG up to $1.15$ times AHNAG's iteration count, and the solid one\,\raisebox{-0.1em}{\includegraphics{figures/keys/agcg-median}} its running median over $25$ iterations.}
\label{fig:num-Lk-pnp}
\end{figure}

Table~\ref{tab:pnp3d} and Figure~\ref{fig:pnp3d} show the results. With fixed $L=L_\star$, the error grows during the reported runs for HNAG at every $\tau$ and for GCG at $\tau=0.05$. Neither fixed-step method reaches the tolerance within the cap. Adaptive GCG reaches it at $\tau=0.05$ and $0.2$, while AHNAG reaches it at all three values. At the two smaller steps, AHNAG uses about half the oracle calls of adaptive GCG. Backtracking accounts for $2.6$--$3.3\%$ of AHNAG's oracle calls, and $99\%$ of its iterations accept the balanced trial. Its error stays at or below its initial value throughout.  Fig.~\ref{fig:num-Lk-pnp} shows the change of $L_k$ for several $\tau$ in 1-D or 3-D. 

\subsection{Multi-step flows}\label{sec:num-flow}

{We run {$15$} JKO steps with {$(\tau,m)=(5,128)$} for granular media, {$(0.05,512)$} for PNP, and $(\tau,n)=(0.05,32)$ for three-dimensional PNP, where HNAG is omitted since it does not reach the tolerance within the cap at this $\tau$ (Table~\ref{tab:pnp3d}). Each step uses the previous step's final iterate $x_k$ as its transport reference and initial iterate; only the first initial iterate is random.} {Figure~\ref{fig:num-flow} shows the cumulative CPU time; HNAG uses the estimate of $L_\star$ from the first step.} {Each method computes its own flow; terminal densities agree within {$2.6\times10^{-4}$} in $\ell^1$, measuring solver agreement rather than discretization accuracy.}

\begin{figure}[tbp]
\centering
\includegraphics[width=0.9\textwidth,height=4.5cm]{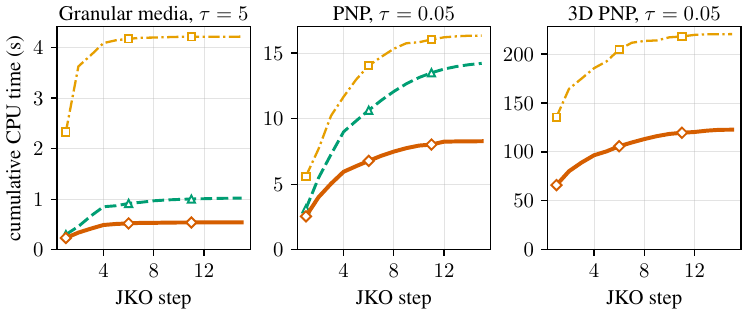}
\caption{{Cumulative CPU time (s) across the warm-started flows, granular media (left, $m=128$), PNP (middle, $m=512$) and three-dimensional PNP (right, the setting of Table~\ref{tab:pnp3d}; HNAG is omitted, since it does not reach the tolerance there), from the random initial iterate with seed $0$. Curves: adaptive GCG\,\raisebox{-0.1em}{\includegraphics{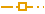}}, HNAG\,\raisebox{-0.1em}{\includegraphics{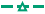}} and AHNAG\,\raisebox{-0.1em}{\includegraphics{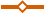}}.}}
\label{fig:num-flow}
\end{figure}

Most of the cumulative time in Figure~\ref{fig:num-flow} is spent on the early JKO steps. Reusing the previous output as the initial guess reduces the later inner solves. AHNAG has the lowest cumulative time in all three examples, and the savings over adaptive GCG persist across the flow. AHNAG also solves larger JKO steps within the tested budgets: at $\tau=1$ in Table~\ref{tab:pnp3d}, AHNAG reaches the tolerance, whereas adaptive GCG does not within the iteration cap.

\section{Conclusion and future work}\label{sec:conclusion}

We proposed a practical adaptive acceleration of generalized conditional gradient methods based on a computable primal--dual certificate. The method accepts a balanced accelerated trial when the certificate passes and otherwise takes a mirror-descent step. Under the stated assumptions, every iteration contracts the Lyapunov function, giving global linear convergence; near the solution, a positive fraction of balanced steps gives a local accelerated rate. The granular-media and Poisson--Nernst--Planck experiments show fewer oracle calls than adaptive GCG, including in three dimensions.

Future work will focus on large-scale applications, where the regularized linear oracle accounts for most of the computational cost. Efficient parallel implementations and approximate oracle evaluations could extend the method to finer discretizations and larger transport problems. This also calls for extending the computable certificate and convergence analysis to controlled oracle errors.

\section*{Acknowledgments}
The authors used OpenAI Codex to assist with writing and with developing and checking the proofs. The authors are responsible for the mathematical results and the final manuscript.

\bibliographystyle{siamplain}
\bibliography{references,aamd_references}
\end{document}